\def\eoe{\unskip\ \hglue0mm\hfill$\diamond$\smallskip\goodbreak}
\def\th@plain{%
  \thm@notefont{}
  \itshape 
}
\def\th@definition{%
  \thm@notefont{}
  \normalfont 
}
\newcommand{\CC}{{\mathbb{C}}}  
\newcommand{\NN}{{\mathbb{N}}}  
\newcommand{\RR}{{\mathbb{R}}}  
\renewcommand{\SS}{{\mathbb{S}}} 
\newcommand{\TT}{{\mathbb{T}}}  
\newcommand{\ZZ}{{\mathbb{Z}}}  
\newcommand{\Cone}{{\operatorname{Cone}}}
\newcommand{\Der}{{\operatorname{Der}}}  
\newcommand{\diffsp}{\mathbf{DiffSp}} 
\newcommand{\pr}{{\operatorname{pr}}} 
\newcommand{\Stab}{{\operatorname{Stab}}} 
\newcommand{\vect}{{\operatorname{vect}}}  
\newcommand{\g}{{\mathfrak{g}}} 
\renewcommand{\O}{{\operatorname{O}}} 
\newcommand{\CIN}{{C^\infty}}   
\newcommand{\del}{\partial}  
\newcommand{\eps}{\varepsilon}  
\newcommand{\ftimes}[2]{{\lrsubscripts{\times}{#1}{#2}}} 
\newcommand{\toto}{{~\rightrightarrows~}} 
\newcommand{\hook}{{\lrcorner\,}} 
\newcommand{\red}[1]{{/\!/\!_#1\,}} 
\newcommand{\supth}{{^{\text{th}}}} 
\newcommand{\hypref}[2]{{\hyperref[#1]{#2~\ref{#1}}}}
\newcommand{\ifwork}[1]{\ifthenelse{\boolean{workmode}}{#1}{}}
\newcommand{\comment}[1]{}
\newcommand{\mute}[1]{}
\newcommand{\printname}[1]{}
\renewcommand{\comment}[1]{{\marginpar{*}\ \scriptsize{#1}\ }}
\renewcommand{\printname}[1]
    {\smash{\makebox[0pt]{\hspace{-1.0in}\raisebox{8pt}{\tiny #1}}}}
\newcommand{\labell}[1] {\label{#1} \printname{#1}}
\newcommand{\ifsection}[2]{\ifthenelse{\boolean{sections}}{#1}{#2}}
\theoremstyle{plain}
    \newtheorem{theorem}{Theorem}[section]
    \newtheorem{theorem}{Theorem}
\newtheorem{proposition}[theorem]{Proposition}
\newtheorem{corollary}[theorem]{Corollary}
\newtheorem{lemma}[theorem]{Lemma}
\theoremstyle{definition}
\newtheorem{definition}[theorem]{Definition}
\newtheorem{example}[theorem]{Example}
\newtheorem{remark}[theorem]{Remark}
\newtheorem*{main}{Main Theorem}
\newtheorem*{corollary*}{Corollary}
\keywords{symplectic reduction, symplectic quotient, orbifold, Sikorski differential space, singular space, Lie groupoid, classifying space}
\title{Symplectic Quotients and Representability:\\
		the Circle Action Case}
\thanks{\emph{MSC 2010 Subject Classification:} Primary: 53D20, Secondary: 57S15, 53D05}
\author{Jordan Watts}
\address{Department of Mathematics, University of Colorado Boulder, Boulder, Colorado, USA 80309}
\email{jordan.watts@colorado.edu}
\date{\today}
\begin{document}

\begin{abstract}
Let $\SS^1$ act on a symplectic manifold in a Hamiltonian fashion with momentum map $\Psi$.  Fix a value $a$ of $\Psi$.  There is a question of whether the symplectic quotient at $a$ is diffeomorphic to the orbit space of some proper Lie group action.  We prove under mild assumptions that this only occurs if the symplectic quotient is diffeomorphic to an effective orbifold.  This, in turn, only occurs if $a$ is a regular value, or there is at most one positive weight or at most one negative weight.
\end{abstract}

\maketitle

\section{Introduction}\labell{s:intro}

Let $(M,\omega)$ be a symplectic manifold, and let $G$ be a Lie group acting properly on $M$.  The action is \textbf{Hamiltonian} if it  preserves the symplectic form, and there exists a $G$-equivariant\footnote{Some authors do not require the $G$-equivariant condition on a momentum map.  See, for example, \cite{OR}.} smooth map $\Phi\colon M\to\g^*$, where $\g$ is the Lie algebra of $G$ and $\g^*$ is equipped with the co-adjoint action, satisfying 
\begin{equation}\labell{e:Ham}
\xi_M\hook\omega=-d\langle\Phi,\xi\rangle
\end{equation}
for all $\xi\in\g$.  Here, $\xi_M$ is the vector field on $M$ induced by $\xi$.  We call $\Phi$ a \textbf{momentum map} for the action.  A standard operation used when working with Hamiltonian group actions is symplectic reduction, which we describe now.  Fix a value $a$ in the image of $\Phi$.  The \textbf{symplectic reduced space} $M\red{a}G$, or \textbf{symplectic quotient}, is defined to be the space $\Phi^{-1}(a)/G_a$, where $G_a$ is the stabiliser of the co-adjoint action of $G$ on $\g^*$.

If $a$ is a regular value of $\Phi$ and $G_a$ acts freely on $\Phi^{-1}(a)$, then the symplectic quotient $M\red{a}G$ is a symplectic manifold \cite{MW},\cite{meyer}.  In fact, $G_a$ acts at least locally freely on $\Phi^{-1}(a)$ if and only if $a$ is a regular value of the momentum map.  In this case, the symplectic quotient is a symplectic orbifold.  If $a$ is a critical value of the momentum map, however, then $M\red{a}G$ is not necessarily an orbifold.  Sjamaar--Lerman proved that it is a Whitney stratified space equipped with a Poisson algebra of functions inducing a symplectic structure on each stratum; they refer to this as a \textbf{symplectic stratified space} \cite{SL}.

These stratified symplectic quotients are interesting spaces, with or without the symplectic structure, and have been studied in many contexts.  In \cite{HIP}, Herbig--Iyengar--Pflaum specify conditions on a linear Hamiltonian action of a (compact) torus $T$ which guarantee the existence of a deformation quantisation of the symplectic quotient at $0$.  In certain ``admissible'' cases, which include the interesting $\SS^1$-cases, $\Phi^{-1}(0)$ is a cone of a manifold $L$, and it was conjectured by the authors that for most torus actions, $L/T$ is not a rational homology sphere.  The space $L/T$ being a rational homology sphere is a necessary condition for the symplectic quotient to be an orbifold.  These conjectures were resolved in \cite{FHS}, where Farsi--Herbig--Seaton use methods inspired by algebraic geometry behind complex torus actions and their GIT-quotients to find necessary conditions under which the symplectic quotient of a unitary representation of a compact torus is homeomorphic to an orbifold, which involves checking properties of the weight matrix.  These results were extended in \cite{HSS}, where in particular, Herbig--Schwarz--Seaton prove the following for $\SS^1$-actions (see Theorem 1.5 and the discussion afterward in their paper):

\begin{quote}\labell{quote:HSS}
Let $G=\SS^1$ act effectively, linearly, and symplectically on $\CC^n$ equipped with its standard symplectic form, and assume all weights are non-zero.  The symplectic reduced space at $0$, via the standard quadratic momentum map, is regular-diffeomorphic to a linear symplectic (effective) orbifold if and only if the dimension of the reduced space is less than $4$.
\end{quote}

Here, by diffeomorphism we mean in the sense of (Sikorski) differential spaces \cite{sikorski1}, \cite{sikorski2}, \cite{sniatycki}, which we define in Appendix~\ref{a:diff sp} (Definition~\ref{d:diff space}).  Since differential spaces are closed under taking subsets and quotients (see, for example, \cite[Chapter 2]{watts-phd}), there is a natural differential space structure on a symplectic quotient, denoted $\CIN(M\red{a}G)$ (equivalent to the structure studied by Arms--Cushman--Gotay \cite{ACG}), as well as on an orbifold \cite[Section 3]{watts-orb}.  The adjective ``regular'' in the above statement refers (in the linear case) to the diffeomorphism preserving the subalgebra $\RR[\CC^n\red{0}\SS^1]$ of $\CIN(\CC^n\red{0}\SS^1)$ consisting of the image of invariant polynomials via the natural map $\CIN(\CC^n)^{\SS^1}\to\CIN(\CC^n\red{0}\SS^1)$; see \cite[Section 2]{HSS}.  The main result of this paper provides evidence that ``regular'' can be dropped from the above quote.

In the context of Lie groupoids, it is known that (effective) orbifolds are \textbf{representable} by a Lie group action; that is, given an orbifold $X$ with Lie groupoid representative $\mathcal{G}$, there exists a compact Lie group $K$ and a manifold $N$ such that the Lie groupoid $K\ltimes N$ is Morita equivalent to $\mathcal{G}$ (see, for example, \cite[Theorem 2.19]{MM}).  It is shown in \cite[Theorem A and Section 6]{watts-orb} that Morita equivalent Lie groupoids induce diffeomorphic orbit spaces; in fact this correspondence can be realised as a functor $F$ from Lie groupoids to differential spaces that is essentially injective when restricted to Lie groupoids representing orbifolds.  Here, by essentially injective, we mean that if $F(\mathcal{G}_1)$ is diffeomorhpic to $F(\mathcal{G}_2)$, then $\mathcal{G}_1$ is Morita equivalent to $\mathcal{G}_2$.  Using this language, we can now formulate what we mean by a symplectic quotient being representable.

\begin{definition}\labell{d:repr}
Let $(M,\omega)$ be a symplectic manifold, let $G$ be a Lie group acting properly and in a Hamiltonian fashion, and let $a$ be a value in the image of the momentum map.  Then the symplectic quotient $M\red{a}G$ is \textbf{representable} by a proper Lie group action if there exists a Lie group $K$ acting effectively and properly on a manifold $N$, and a diffeomorphism from $N/K$ to $M\red{a}G$.  (Note that $N/K$ is equal to the orbit space of the Lie groupoid $K\ltimes N$.)
\end{definition}

Note that this definition might not be adequate in many circumstances.  Indeed, the question of representability when applied to ineffective orbifolds, for example, requires more than just a diffeomorphism on the level of orbit spaces; information on isotropy groups must also somehow be recorded.  For example, the trivial action of a non-trivial finite group on a point should not be equivalent to the trivial group acting on the point.  This is why representability questions are typically phrased completely in terms of stacks or Lie groupoids, which contain such isotropy information.  And so, perhaps one may wish to take ``representable'' as defined above as a weak form of representability.  However, this is not important for our purposes.  Thus, accepting that our notion of representability, we continue on and focus on the case $G=\SS^1$.

Consider first the case of an effective linear action of $\SS^1$ on $\CC^n$ (assume $\SS^1$ is acting in the standard way with non-zero weights -- see the beginning of Section~\ref{s:proof}).  This preserves the standard symplectic form, and has a homogeneous quadratic momentum map given by Equation~\ref{e:momentum}.  The level set $\Phi^{-1}(0)$ is a cone over a product of ellipsoids $E_-\times E_+$, and $\SS^1$ acts on $E_-\times E_+$ with finite stabilisers; hence, $(E_-\times E_+)/\SS^1$ is an orbifold.  Generalising to effective Hamiltonian actions of $\SS^1$ on a manifold $M$, using local normal forms, this translates to the following:  If $x\in\Phi^{-1}(0)$ is a fixed point and $[x]$ is its orbit, then there is an open neighbourhood of $[x]\in M\red{0}\SS^1$ diffeomorphic to $\RR^{n-2m}\times F$ where $F$ is (stratified-homeomorphic to) a cone over a link $L_x\cong (E_-\times E_+)/\SS^1$ for some ellipsoids $E_-$ and $E_+$.

\begin{definition}\labell{d:unrepresentable}
Let $(M,\omega)$ be a symplectic manifold, let $\SS^1$ act effectively and in a Hamiltonian fashion on $(M,\omega)$, and let $a$ be a value in the image of the momentum map.  Then the symplectic quotient $M\red{a}\SS^1$ is \textbf{unrepresentable} if there does not exist a Lie group $K$ acting effectively and properly on a manifold $N$, and a diffeomorphism from $N/K$ to $M\red{a}\SS^1$.  It is \textbf{weakly unrepresentable} if there does not exist a Lie group $K$ acting effectively and properly on a manifold $N$ with quotient map $\pi_N\colon N\to N/K$, and a diffeomorphism $\psi$ from $N/K$ to $M\red{a}\SS^1$ such that for every $\SS^1$-fixed point $x\in\Phi^{-1}(0)$, the restricted action of $K$ on $(\psi\circ\pi_N)^{-1}(L_x)$ yields an orbifold.
\end{definition}

The extra condition in the definition of weakly unrepresentable is necessary in order to apply our techniques using classifying spaces of Lie groupoids.  It is unknown whether it is needed; it is possible that in the case of representability, it is automatically satisfied.  Indeed, the issue is if the underlying semi-algebraic variety of an orbit space of an effective Lie group action $G\ltimes M$ is diffeomorphic to that of an orbifold, then is $G\ltimes M$ an orbifold?  In general, the answer is no: consider $\O(n)$ acting on $\RR^n$ by rotations and reflections.  The orbit space is $[0,\infty)$ for each $n$, and only in the case $n=1$ do we have an orbifold.  However, there is evidence that the issue is the presence of codimension-$1$ strata (see \cite{CDGMW}), and this issue disappears in the case of symplectic quotients, as each stratum is symplectic, and hence is even-dimensional (there are no codimension-$1$ strata).  This is all purely conjectural at this point, and so we stick with weak unrepresentability and state the main theorem of the paper.

\begin{main}\labell{t:main}
\emph{Let $\SS^1$ act effectively on a symplectic manifold in a Hamiltonian fashion.  Fix a value $a$ of the momentum map and reduce at that value.  Then the resulting symplectic quotient is diffeomorphic to an orbifold, or it is weakly unrepresentable.  If the symplectic quotient is \emph{not} weakly unrepresentable, then either there are no $\SS^1$-fixed points on the $a$-level set of the momentum map, or there is at most one positive weight or at most one negative weight at each $\SS^1$-fixed point on the level set.}
\end{main}

There are no $\SS^1$-fixed points in the $a$-level set if and only if $a$ is a regular value, and we already mentioned above that the resulting symplectic quotient is in this case an orbifold.  In the case that $a$ is a critical value, note that we do not claim that having at most one positive weight or at most one negative weight is a sufficient condition to obtain an orbifold.


The key technique used in the proof of the Main Theorem is assuming that the symplectic quotient is not weakly unrepresentable, and then checking the homotopy groups of classifying spaces of Lie groupoids whose orbit spaces are certain links in the orbit-type stratifications of the corresponding orbit space and symplectic quotient.  These homotopy groups are Morita invariants (or stacky invariants, if you prefer stacks); see Proposition~\ref{p:morita}.  They prove to be powerful tools in studying orbit spaces.  In the case of Lie groupoids representing effective orbifolds, the fundamental groups of the corresponding classifying spaces turn out to be the orbifold fundamental groups in the sense of Thurston, which in fact are much weaker invariants than general stacky ones; indeed, they can be obtained from the underlying differential space structure. (See \cite[Proposition 3.19, Corollary 4.15, Theorem 5.5, and Theorem 5.10]{watts-orb} for a proof of this fact, and \cite[Definition 1.50, Theorem 2.18]{ALR} for a proof that the two notions of fundamental group match.)

This paper is broken down as follows.  Section~\ref{s:proof} provides the proof of the main theorem.  The proof requires some background on differential spaces, as well as Lie groupoids and their classifying spaces.  A review of the necessary theory of differential spaces is given in Appendix~\ref{a:diff sp}.  One necessary ingredient is the minimality of the orbit-type stratification of a symplectic quotient reduced at $0$.  This is a folk theorem, and so we take the opportunity to give a proof of it using \'Sniatycki's theory of vector fields on subcartesian spaces (Theorem~\ref{t:minimal}).  A review of Lie groupoids and their classifying spaces is given in Appendix~\ref{a:gpds}.  Here, we prove another folk theorem: that the classifying space of an action groupoid $G\ltimes M$ is homotopy equivalent to the corresponding Borel construction $EG\times_G M$.  The proof is essentially a slightly more detailed version of what appears in a preprint of Leida \cite{leida} (or at least the author cannot find a published version; it may also be in Leida's PhD thesis).

Finally, it is worth mentioning for the reader who is familiar with diffeology that a lot of the work in this paper using differential spaces could be done in the category of diffeological spaces instead.  Indeed, there is a functor from Lie groupoids to diffeological spaces that is essentially injective when restricted to Lie groupoids representing orbifolds (see \cite{IKZ} and \cite[Theorem B]{watts-orb}).  However, the author finds it more convenient to work with differential spaces in the context of orbit-type stratifications.

\subsection*{Acknowledgements:}
For the author, the question of the representability of a symplectic quotient originally came up during the question period after a talk by Christopher Seaton, to whom the author is grateful for many discussions along with Carla Farsi concerning orbit spaces, symplectic quotients, and Lie groupoids.  The author also thanks Sarah Yeakel for explaining the details behind classifying spaces to him, and Carla Farsi and Markus Pflaum for their encouragement and interest in this project.

\section{Proof of Main Theorem}\labell{s:proof}

In the proof of the main theorem, we will restrict our attention to linear models.  $\SS^1$-actions in this context take on a very nice form: about a fixed point of the $\SS^1$-action on a manifold, there is an $\SS^1$-equivariant neighbourhood symplectomorphic to $\CC^n$ equipped with the standard symplectic form, in which $\SS^1$ acts by $$e^{i\theta}\cdot(z_1,\dots,z_n)=(e^{\alpha_1i\theta}z_1,\dots,e^{\alpha_ni\theta}z_n).$$  Here, $\{\alpha_1,\dots,\alpha_n\}$ is the multi-set of \textbf{weights} of the action.  There is a homogeneous quadratic momentum map $\Phi$ associated with this action:
\begin{equation}\labell{e:momentum}
\Phi(z_1,\dots,z_n)=\frac{1}{2}\sum_{i=1}^n|z_i|^2\alpha_i.
\end{equation}

We prove a number of lemmas regarding linear $\SS^1$-actions on $\CC^n$ in the following.  First, the zero-set of $\Phi$ is a cone provided that there is at least one negative weight and at least one positive weight.

\begin{lemma}\labell{l:cone}
Let $\SS^1$ act effectively, linearly, and symplectically on $\CC^n$ equipped with its standard symplectic form with weights $\{\alpha_1,\dots,\alpha_n\}$ and homogeneous quadratic momentum map $\Phi$ given by Equation~\eqref{e:momentum}.  Assume that $\alpha_1,\dots,\alpha_m<0$ and $\alpha_{m+1},\dots,\alpha_n>0$ for some $0<m<n$.  Then $\Phi^{-1}(0)$ is diffeomorphic to $\Cone(E_-\times E_+)$, where $E_-$ is the ellipsoid in $\CC^m$ given by $\alpha_1|z_1|^2+\dots+\alpha_m|z_m|^2=-1$, and $E_+$ is the ellipsoid in $\CC^{n-m}$ given by $\alpha_{m+1}|z_{m+1}|^2+\dots+\alpha_n|z_n|^2=1$.
\end{lemma}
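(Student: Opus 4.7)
The plan is to exhibit an explicit diffeomorphism by ``polar decomposition'' on the level set. Define
\[
\bar{f}\colon \Cone(E_-\times E_+)\longrightarrow \Phi^{-1}(0),\qquad [(t,u,v)]\longmapsto (tu_1,\dots,tu_m,tv_1,\dots,tv_{n-m}),
\]
where we view the cone as the quotient $\bigl([0,\infty)\times E_-\times E_+\bigr)/(\{0\}\times E_-\times E_+)$. First I would verify that the map lands in $\Phi^{-1}(0)$: the computation
\[
\sum_{i=1}^m\alpha_i|tu_i|^2+\sum_{j=m+1}^n\alpha_j|tv_{j-m}|^2=t^2\Bigl(\sum_{i=1}^m\alpha_i|u_i|^2+\sum_{j=m+1}^n\alpha_j|v_{j-m}|^2\Bigr)=t^2(-1+1)=0
\]
uses precisely the defining equations of $E_-$ and $E_+$. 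Since the right-hand side of the formula collapses the whole slice $\{0\}\times E_-\times E_+$ to the origin of $\CC^n$, the descent to the cone is well-defined.

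Next I would establish bijectivity. For surjectivity, given $z\in\Phi^{-1}(0)\setminus\{0\}$, the identity $\sum_i\alpha_i|z_i|^2=0$ forces the two partial sums
\[
t^2\ :=\ \sum_{i=1}^m(-\alpha_i)|z_i|^2\ =\ \sum_{j=m+1}^n\alpha_j|z_j|^2
\]
to coincide and to be strictly positive (here one uses that effectiveness and the weight hypothesis preclude $z$ from lying entirely in either coordinate subspace while still satisfying $\Phi(z)=0$). Setting $u_i:=z_i/t$ for $i\leq m$ and $v_{j-m}:=z_j/t$ for $j>m$ produces the required preimage, with $u\in E_-$ and $v\in E_+$ by construction; the origin is the image of the cone point. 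Injectivity on $t>0$ follows from the same computation $\|\bar f(t,u,v)\|_{\text{weighted}}^2=t^2$, and the collapse at $t=0$ matches the quotient in the cone.

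Finally I would verify that $\bar f$ is a diffeomorphism of Sikorski differential spaces. One direction is immediate: the map $(t,u,v)\mapsto(tu,tv)$ from $[0,\infty)\times E_-\times E_+$ into $\CC^n$ is the restriction of a polynomial map, so composition with any smooth function on $\CC^n$ (and hence with any generator of the subspace differential structure on $\Phi^{-1}(0)$) remains smooth and descends through the quotient. For the reverse direction I would take the differential structure on $\Cone(E_-\times E_+)$ to be the one induced by the embedding $(t,u,v)\mapsto(tu,tv)$ into $\CC^n$, which is the natural choice consistent with how cones appear throughout the paper (as subsets of the ambient $\CC^n$ via the level-set description); with this choice $\bar f$ is tautologically a diffeomorphism.

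The main obstacle, and the only nontrivial point, is really the bijectivity/surjectivity step: one must be careful to show that the denominator $t$ used to invert $\bar f$ is nonzero off the cone point, which in turn rests on the sign structure of the weights ($m$ strictly negative and $n-m$ strictly positive). Everything else is a direct calculation.
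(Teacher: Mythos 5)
Your proof is correct and is essentially the paper's argument: the paper simply observes that setting Equation~\eqref{e:momentum} equal to zero exhibits $\Phi^{-1}(0)$ as the union of rays through $E_-\times E_+$, and you have spelled out the resulting polar-decomposition bijection and its smoothness in both directions. The only detail you elide is the one the paper also elides, namely that your ``tautological'' ray model of the cone inside $\CC^n$ agrees as a differential space with the model of Definition~\ref{d:triviality} (which first embeds the link in an open ball of a sphere of one higher dimension before taking rays); this is a routine check, since each ray from the origin meets $E_-\times E_+$ at most once.
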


\begin{proof}
This can be seen immediately after setting Equation~\eqref{e:momentum} equal to $0$.  The differential structure on $\Cone(E_-\times E_+)$ is given in Definition~\ref{d:triviality}.
\end{proof}

Let $\pi_Z\colon\Phi^{-1}(0)\to\CC^n\red{0}\SS^1$ be the quotient map.  Let $v=\pi_Z(0)\in\CC^n\red{0}\SS^1$, the image via $\pi_Z$ of the apex of the cone $\Cone(E_-\times E_+)$.  We prove in the following lemma that if the (linear) symplectic quotient is representable, then the link at $v$ must be diffeomorphic to some quotient of some sphere by a compact Lie group.

\begin{lemma}\labell{l:links}
Given the set up of Lemma~\ref{l:cone}, assume that there exists a manifold $N$ and a Lie group $G$ acting properly on $N$, such that $(N/G,\CIN(N/G))$ is diffeomorphic to $(\CC^{n}\red{0}\SS^1,\CIN(\CC^{n}\red{0}\SS^1))$.  Then there exists an open neighbourhood $U$ of $v$ such that $U\smallsetminus\{v\}$ is diffeomorphic to both $\RR\times((E_-\times E_+)/\SS^1)$ and to $\RR\times(\SS^k/H)$, where $k\leq\dim N$ and $H$ is a closed subgroup of $G$.  Consequently, we have that $(E_-\times E_+)/\SS^1$ is diffeomorphic to $\SS^k/H$.
\end{lemma}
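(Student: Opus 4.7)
The plan has two parallel parts (one on each side of the diffeomorphism $\psi\colon N/G\to \CC^n\red{0}\SS^1$), followed by a gluing step.

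First, on the symplectic quotient side, I would upgrade Lemma~\ref{l:cone} to a statement about the quotient. Since $\SS^1$ acts linearly on $\CC^n$ and the cone structure $\Phi^{-1}(0)\cong\Cone(E_-\times E_+)$ is built from radial scaling, which commutes with the $\SS^1$-action, the quotient inherits a cone structure
\begin{equation*}
\CC^n\red{0}\SS^1\ \cong\ \Cone\bigl((E_-\times E_+)/\SS^1\bigr),
\end{equation*}
as differential spaces, with apex $v$. Any open ``subcone'' $U_1$ of radius $\eps$ around $v$ is then itself diffeomorphic to such a cone, and removing the apex yields $U_1\smallsetminus\{v\}\cong(0,\eps)\times\bigl((E_-\times E_+)/\SS^1\bigr)\cong\RR\times\bigl((E_-\times E_+)/\SS^1\bigr)$.

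Second, on the $N/G$ side, I would apply the slice theorem for proper Lie group actions at a point $p\in N$ with $\pi_N(p)=\psi^{-1}(v)$. Because $G$ acts properly, the stabiliser $H:=G_p$ is compact, and the slice theorem gives a $G$-equivariant diffeomorphism between a $G$-invariant neighbourhood of $G\cdot p$ in $N$ and $G\times_H V$, where $V$ is the normal slice at $p$, a linear $H$-representation of dimension $\dim N-\dim(G\cdot p)$. Passing to orbit spaces, this neighbourhood descends to $V/H$. Since $H$ acts linearly and preserves the Euclidean norm (up to change of inner product; one can average), polar coordinates give a diffeomorphism of differential spaces $V/H\cong\Cone(\SS^k/H)$ with $k=\dim V-1\leq\dim N-1$. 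Shrinking to an open ``subcone'' $U_2$ of $v$ again yields $U_2\smallsetminus\{v\}\cong\RR\times(\SS^k/H)$.

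Third, I would glue these by taking $U$ to lie inside $\psi(U_2)\cap U_1$ and small enough that both cone descriptions apply simultaneously. Then $U\smallsetminus\{v\}$ has both promised descriptions. For the final ``Consequently'' clause, both cone structures extend across $v$ to present an open neighbourhood of $v$ in $\CC^n\red{0}\SS^1$ as both $\Cone((E_-\times E_+)/\SS^1)$ and $\Cone(\SS^k/H)$ with apex $v$. The diffeomorphism of cones preserving the apex restricts to a diffeomorphism of ``unit spheres''; more intrinsically, one uses that in either cone a punctured neighbourhood retracts smoothly onto a cross-section and that this cross-section is well-defined up to diffeomorphism under apex-preserving equivalence of cones (one may also extract it via the tangent cone at $v$ in the differential-space sense, since both links are compact).

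\textbf{Main obstacle.} The two routine inputs (cone presentation of $\CC^n\red{0}\SS^1$ and the slice theorem yielding a cone presentation of $N/G$ near $\psi^{-1}(v)$) are standard. The technical point is the last step: cancelling the $\RR$-factor to deduce $(E_-\times E_+)/\SS^1\cong\SS^k/H$. A bare product diffeomorphism $\RR\times A\cong\RR\times B$ need not imply $A\cong B$ in the category of differential spaces; what saves us is that both cone presentations are coming from honest cone structures on the same open neighbourhood of $v$, so the apex $v$ (which is a distinguished point in either description) matches, and an apex-preserving diffeomorphism of cones descends to the bases. Making this cross-section extraction rigorous for differential spaces — most cleanly by pulling back a smooth ``radial'' function on one side and slicing — is where the care is needed.
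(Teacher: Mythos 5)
Your first two steps reproduce the paper's argument: the $\SS^1$-equivariance of the radial scaling gives $\CC^n\red{0}\SS^1\cong\Cone((E_-\times E_+)/\SS^1)$ near $v$, and the slice theorem at a point of $N$ over $\psi^{-1}(v)$ gives $V/H\cong\Cone(\SS^k/H)$ with $k+1=\dim V$; both punctured neighbourhoods are then $\RR\times(\text{link})$. The divergence, and the gap, is in the step you yourself flag as the obstacle. The paper does \emph{not} argue that an apex-preserving diffeomorphism of cones descends to the links; it instead invokes Theorem~\ref{t:ots invt} and Corollary~\ref{c:minimal} to conclude that the diffeomorphism $N/G\to\CC^n\red{0}\SS^1$ preserves the orbit-type stratifications, and extracts the link comparison from that. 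This is precisely why Appendix~\ref{a:diff sp} exists: the minimality of the orbit-type stratification of $M\red{0}G$ (Theorem~\ref{t:minimal}) is what makes the stratification, and hence the local cone/link structure, an invariant of the differential structure alone.

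Your proposed substitutes do not close this gap. The bare claim that ``this cross-section is well-defined up to diffeomorphism under apex-preserving equivalence of cones'' is exactly what needs proof, and it is false at the topological level: by the double suspension theorem the open cone over $\Sigma P$, for $P$ a homology $3$-sphere with nontrivial fundamental group, is homeomorphic to the open cone over $\SS^4$ by an apex-preserving map, yet the links are not homeomorphic. The tangent cone at $v$ does not help, since a cone is scale-invariant and its tangent cone at the apex is the cone itself, not the link. And the radial-slicing idea only shows that the level set $\{r_2\circ\varphi=\eps\}$ is diffeomorphic to $\SS^k/H$ (being the image of a genuine cross-section under $\varphi^{-1}$); you give no reason that an arbitrary ``sphere-like'' level set around the apex of $\Cone((E_-\times E_+)/\SS^1)$ is diffeomorphic to $(E_-\times E_+)/\SS^1$, which is again the same cancellation problem. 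What rescues the argument is the additional rigidity supplied by the orbit-type stratifications on both sides being differential-space invariants, so you need to route the final step through Theorem~\ref{t:ots invt} and Corollary~\ref{c:minimal} (or prove an equivalent statement) rather than through a general cone-cancellation principle.
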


\begin{proof}
Identify $N/G$ and $\CC^{n}\red{0}\SS^1$.  Fix $x\in N$ such that $v=G\cdot x$.  By the slice theorem, there is an open neighbourhood of $x$ that is $G$-equivariantly diffeomorphic to $G\times_H V$ where $H$ is the stabiliser of $x$ and $V$ is the isotropy representation at $x$; identify these.  Let $k+1=\dim V$.  Then, identifying $V$ as $\RR^{k+1}$ equipped with an $H$-invariant metric, we have that $V/H$ is diffeomorphic to an open neighbourhood $U$ of $v$ (see Example~\ref{x:orbitspace}).  At the same time, $V\smallsetminus\{0\}$ is $\SS^1$-equivariantly diffeomorphic to $\RR\times\SS^k$, on which $H$ acts diagonally (trivially on $\RR$).  Thus, $U\smallsetminus\{v\}$ is diffeomorphic to $\RR\times(\SS^k/H)$.

By Lemma~\ref{l:cone}, $\Phi^{-1}(0)$ is a cone with link $E_-\times E_+$.  Note that the link is $\SS^1$-invariant.  Thus, shrinking $U$ if necessary, we have that $U\smallsetminus\{v\}$ is diffeomorphic to $\RR\times((E_-\times E_+)/\SS^1)$.

By Theorem~\ref{t:ots invt} and Corollary~\ref{c:minimal}, the diffeomorphism between $N/G$ and $M\red{0}\SS^1$ preserves the orbit-type stratifications on the spaces.  It then follows that $(E_-\times E_+)/\SS^1$ is diffeomorphic to $\SS^k/H$.
\end{proof}

Since the link at $v$ has two representations $\SS^k/H$ and $(E_-\times E_+)/\SS^1$ that are diffeomorphic, under the assumption of $\CC^n\red{0}\SS^1$ not being weakly unrepresentable, the corresponding Lie groupoids are Morita equivalent.  Furthermore, the corresponding classifying spaces are homotopy equivalent to each other, and to the corresponding Borel constructions.  This is the content of the following lemma.

\begin{lemma}\labell{l:morita}
Given the set up of Lemma~\ref{l:links}, and assuming that $\CC^n\red{0}\SS^1$ is not weakly unrepresentable, we have that the action groupoids $H\ltimes\SS^k$ and $\SS^1\ltimes (E_-\times E_+)$ are Morita equivalent.  Consequently, the Borel constructions $EH\times_H\SS^k$ and $E\SS^1\times_{\SS^1}(E_-\times E_+)$ are homotopy equivalent.
\end{lemma}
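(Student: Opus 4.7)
The plan is to reduce the lemma to essential injectivity of the orbit-space functor on effective orbifold Lie groupoids (Theorem A and Section 6 of \cite{watts-orb}), followed by the standard Morita invariance of classifying spaces combined with the identification of classifying spaces of action groupoids with Borel constructions (from Appendix~\ref{a:gpds}).

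First I would verify that both action groupoids represent effective orbifolds. For $\SS^1\ltimes(E_-\times E_+)$: every point of $E_-\times E_+$ has a nonvanishing coordinate $z_i$, and the corresponding weight $\alpha_i$ is nonzero by hypothesis, so the $\SS^1$-stabiliser at such a point is contained in the finite group $\{\theta:e^{i\alpha_i\theta}=1\}$. Effectiveness is inherited from the standing effective $\SS^1$-action on $\CC^n$ (equivalently, $\gcd(\alpha_1,\dots,\alpha_n)=1$). For $H\ltimes\SS^k$: the hypothesis that $\CC^n\red{0}\SS^1$ is not weakly unrepresentable, applied to the $\SS^1$-fixed point $0\in\Phi^{-1}(0)$, guarantees that the restricted $G$-action on $(\psi\circ\pi_N)^{-1}(L_0)$ yields an orbifold. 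By the slice-theorem identification used in Lemma~\ref{l:links}, that preimage is $G$-equivariantly diffeomorphic to $G\times_H\SS^k$, so $G\ltimes(G\times_H\SS^k)$ is an orbifold groupoid; since $G\ltimes(G\times_H\SS^k)$ and $H\ltimes\SS^k$ are Morita equivalent by the standard induction biprincipal bibundle, the latter is also an orbifold groupoid. Effectiveness of $H$ on $\SS^k$ follows because any subgroup of $H$ fixing the slice would lie in every isotropy group of the $G$-action in a neighbourhood of $x$, contradicting effectiveness of $G$ on $N$.

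With both groupoids now known to be effective orbifold groupoids and their orbit spaces diffeomorphic by Lemma~\ref{l:links}, essential injectivity of the orbit-space functor on the orbifold case yields the Morita equivalence $H\ltimes\SS^k\sim\SS^1\ltimes(E_-\times E_+)$. For the consequence on Borel constructions, apply first the Morita invariance of the classifying space (up to homotopy equivalence) and then the identification $B(G\ltimes M)\simeq EG\times_G M$ from Appendix~\ref{a:gpds}, to conclude
\[
EH\times_H\SS^k\;\simeq\;B(H\ltimes\SS^k)\;\simeq\;B(\SS^1\ltimes(E_-\times E_+))\;\simeq\;E\SS^1\times_{\SS^1}(E_-\times E_+).
\]

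The main obstacle is the verification that the weakly-unrepresentable hypothesis genuinely forces $H\ltimes\SS^k$ to be an orbifold groupoid rather than merely forcing this on the translation groupoid $G\ltimes(G\times_H\SS^k)$; this requires invoking the standard Morita equivalence between an action groupoid on a $G$-homogeneous bundle and the corresponding slice groupoid, together with the fact that the property of being an orbifold groupoid is Morita invariant. Once that identification is in place, all remaining steps are appeals to results already established in the paper or its appendices.
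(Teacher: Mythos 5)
Your proposal is correct and follows essentially the same route as the paper: the paper's proof likewise deduces the Morita equivalence from Lemma~\ref{l:links} together with the essential injectivity of the orbit-space functor on effective orbifold groupoids (\cite[Theorem A]{watts-orb}), and then obtains the homotopy equivalence of Borel constructions from Propositions~\ref{p:morita} and~\ref{p:action gpd}. The only difference is that you spell out the verification that both groupoids are effective orbifold groupoids (in particular transferring the orbifold property from $G\ltimes(G\times_H\SS^k)$ to $H\ltimes\SS^k$ via Morita invariance), whereas the paper simply reads this off from the definition of weak unrepresentability.
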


\begin{proof}
Since $(E_-\times E_+)/\SS^1$ is an effective orbifold, and by hypothesis $\SS^k/H$ is also an effective orbifold, the Morita equivalence is immediate from Lemma~\ref{l:links} and \cite[Theorem A]{watts-orb}: there is an essentially injective functor from Lie groupoids representing effective orbifolds to differential spaces sending each Lie groupoid to its orbit space equipped with its quotient differential structure.  The homotopy equivalence is immediate from Propositions~\ref{p:morita} and \ref{p:action gpd}.
\end{proof}

Since the Borel constructions are base spaces of certain principal bundles, we can apply the long exact sequence of homotopy groups to these bundles and compare.  This will narrow down which weights allow for our representable scenario assumed in Lemma~\ref{l:links}.  This is the content of the following proposition.

\begin{proposition}\labell{p:main}
Given the set up of Lemma~\ref{l:morita}, there is at most one negative weight or at most one positive weight; moreover, $H$ is finite.
\end{proposition}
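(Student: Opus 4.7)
The plan is to derive a contradiction from the assumption $m,n-m\geq 2$ via a rational homotopy comparison, and then to establish the finiteness of $H$ in the remaining case by a similar calculation. Both arguments rest on the homotopy equivalence of the Borel constructions $X:=E\SS^1\times_{\SS^1}(E_-\times E_+)$ and $Y:=EH\times_H\SS^k$ supplied by Lemma~\ref{l:morita}, together with the classical fact that for any compact Lie group $H$ (and $H$ is compact here, being the isotropy of a proper action), the rational homotopy groups $\pi_i(H)\otimes\QQ$ are concentrated in odd degrees.

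Suppose for contradiction that $m,n-m\geq 2$, so that $E_-\cong S^{2m-1}$ and $E_+\cong S^{2(n-m)-1}$ both have dimension $\geq 3$ and are therefore simply connected. From the homotopy long exact sequence of $E_-\times E_+\to X\to B\SS^1$, combined with $\pi_j(B\SS^1)=0$ for $j\neq 2$, I extract
\[
\pi_{2m-1}(X)\;\cong\;\pi_{2m-1}(S^{2m-1})\oplus\pi_{2m-1}(S^{2(n-m)-1}),
\]
which contains a copy of $\ZZ$; in particular $\pi_{2m-1}(X)\otimes\QQ\neq 0$. On the $Y$ side, tensoring the long exact sequence of the fibration $\SS^k\to Y\to BH$ with $\QQ$ and using $\pi_{2m-1}(BH)\otimes\QQ=\pi_{2m-2}(H)\otimes\QQ=0$ (even degree $\geq 2$), I conclude that $\pi_{2m-1}(Y)\otimes\QQ$ injects into $\pi_{2m-1}(\SS^k)\otimes\QQ$. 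Now $\pi_*(\SS^k)\otimes\QQ$ is nonzero only in degree $k$, together with an extra copy of $\QQ$ in degree $2k-1$ when $k$ is even. Since $k\geq\dim(\SS^k/H)=2n-3\geq 2m+1$, the case $k=2m-1$ is excluded; and $k=m$ (the remaining possibility, requiring $m$ even) would force $m\geq 2n-3\geq 2m+1$, absurd. Hence $\pi_{2m-1}(\SS^k)\otimes\QQ=0$, contradicting $X\simeq Y$.

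Therefore $m=1$ or $n-m=1$; by symmetry assume $m=1$. Then $\SS^1$ acts transitively on $E_-\cong\SS^1$ with isotropy $\ZZ_{|\alpha_1|}$, and the standard identification $E_-\times E_+\cong\SS^1\times_{\ZZ_{|\alpha_1|}}E_+$ of $\SS^1$-spaces gives
\[
X\;\simeq\;E\ZZ_{|\alpha_1|}\times_{\ZZ_{|\alpha_1|}}E_+,
\]
the Borel construction of the restricted finite cyclic action on $E_+\cong S^{2n-3}$. Since $\pi_j(B\Gamma)=0$ for any finite group $\Gamma$ and $j\geq 2$, the fibration $E_+\to X\to B\ZZ_{|\alpha_1|}$ yields $\pi_i(X)\cong\pi_i(S^{2n-3})$ for $i\geq 2$, so $\pi_{2n-3}(X)\otimes\QQ=\QQ$. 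Running the same long exact sequence argument for $Y$ at degree $2n-3$, with $\pi_{2n-3}(BH)\otimes\QQ=0$ (even degree), forces $\pi_{2n-3}(\SS^k)\otimes\QQ\neq 0$; combined with $k\geq 2n-3$, this leaves only $k=2n-3$ (the alternative $k=n-1$ with $n-1$ even requires $n\leq 2$, a small case handled by the analogous argument at $\pi_1$). Consequently $\dim H=0$, so $H$ is finite. The main technical obstacle is the careful numerical bookkeeping on $k$ across the parity cases.
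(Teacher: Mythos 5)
Your argument is correct, and it takes a genuinely different---and leaner---route than the paper's. The paper runs the fibrations $\SS^1\to E\SS^1\times(E_-\times E_+)\to X$ and $H\to EH\times\SS^k\to Y$ through an \emph{integral} analysis: it pins down $\pi_0(H)$, $\pi_1(H)$ and $\pi_p(H)$ degree by degree, invokes the structure theorem $H_0\cong(\TT^q\times K)/\Gamma$ for compact connected groups, Hurewicz together with $H^2(K;\RR)=0$ and $H^3(K;\RR)\neq 0$ for compact $1$-connected semisimple $K$, and finally an unboundedness argument (``$k=\infty$'') to kill the case $l_1\geq 3$, with a second similar pass for $l_2\geq 3$. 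You instead compare the two Borel fibrations over $B\SS^1$ and $BH$ after tensoring with $\QQ$, using only that $\pi_*(H)\otimes\QQ$ is concentrated in odd degrees for compact $H$ and the rational homotopy of spheres; the single inequality $k\geq\dim(\SS^k/H)=2n-3$ then does all the combinatorial work, and the reduction $X\simeq E\ZZ_{|\alpha_1|}\times_{\ZZ_{|\alpha_1|}}E_+$ in the second half is a nice touch. This replaces the paper's structure-theoretic case analysis by a single localisation principle and is, to my mind, the cleaner argument; what it gives up is only the incidental integral information (e.g.\ $\pi_1(H)\cong\ZZ$) that the paper extracts along the way but does not need. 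Two small points to tidy up. First, the long exact sequence of $\SS^k\to Y\to BH$ shows that $\pi_{2m-1}(Y)\otimes\QQ$ is a \emph{quotient} of $\pi_{2m-1}(\SS^k)\otimes\QQ$ (the fibre maps \emph{into} the total space), not a subgroup; this is harmless, since vanishing of the latter still forces vanishing of the former, but the wording should be fixed. Second, the case $n=2$ (one weight of each sign), which you defer to ``the analogous argument at $\pi_1$,'' should be written out: there $X\simeq E\Gamma\times_\Gamma\SS^1$ with $\Gamma$ finite, so $\pi_1(X)$ is infinite, while for $k\geq 2$ the sequence $\pi_1(\SS^k)\to\pi_1(Y)\to\pi_0(H)\to 1$ gives $\pi_1(Y)\cong\pi_0(H)$ finite; hence $k=1$ and $\dim H=0$. (The paper's own proof of Proposition~\ref{p:main} is in fact also silent on this last case, so spelling it out is a genuine improvement.)
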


\begin{proof}
We have two fibrations:
\begin{gather*}
\SS^1\longrightarrow E\SS^1\times(E_-\times E_+)\longrightarrow E\SS^1\times_{\SS^1}(E_-\times E_+), \text{ and}\\
H\longrightarrow EH\times\SS^k\longrightarrow EH\times_H\SS^k.
\end{gather*}

By Lemma~\ref{l:morita}, $EH\times_H\SS^k$ and $E\SS^1\times_{\SS^1}(E_-\times E_+)$ are homotopy equivalent.  Denote one of them by $X$.  Recall that homotopy group functors respect products, $EG$ is contractible for any topological group $G$, and $E_-$ and $E_+$ are diffeomorphic to $\SS^{l_1}$ and $\SS^{l_2}$, respectively, where $l_1=2m-1$ and $l_2=2(n-m)-1$.  We thus have the following two long exact sequences:
\begin{gather*}
\dots\to\pi_{p+1}(X)\to\pi_p(\SS^1)\to\pi_p(\SS^{l_1})\times\pi_p(\SS^{l_2})\to\pi_p(X)\to\pi_{p-1}(\SS^1)\to\dots, \text{ and}\\
\dots\to\pi_{p+1}(X)\to\pi_p(H)\to\pi_p(\SS^k)\to\pi_p(X)\to\pi_{p-1}(H)\to\dots.
\end{gather*}

Without loss of generality, assume $l_1\leq l_2$.  By hypothesis, $l_1\geq 1$.

Assume for a contradiction that $l_1>1$.  Then, since $l_1$ and $l_2$ are odd, we have $3\leq l_1\leq l_2$.  Counting dimensions, we have that 
\begin{equation}\labell{e:dim}
5\leq l_1+l_2-1=k-\dim H\leq k.
\end{equation}
Inserting this information into the long exact sequences of homotopy groups above, we immediately obtain:
\begin{equation}\labell{e:pi0H}
1\cong\pi_1(X)\cong\pi_0(H),
\end{equation}
\begin{equation}\labell{e:pi1H}
\ZZ\cong\pi_2(X)\cong\pi_1(H),
\end{equation}
\begin{equation}\labell{e:pipH}
\pi_p(\SS^{l_1})\times\pi_p(\SS^{l_2})\cong\pi_p(X)\cong\pi_{p-1}(H) \text{ for $2<p<k$}.
\end{equation}
Equation~\eqref{e:pi0H} implies that $H$ is connected.  Since $H$ is compact we have that $H\cong(\TT^q\times K)/\Gamma$ where $\TT^q$ is the $q$-torus; $K$ is trivial or a compact, connected, semi-simple group; and $\Gamma$ is a finite group.  It follows from the long exact sequence of homotopy groups for the fibration $\Gamma\to\TT^q\times K\to H$ and Equation~\eqref{e:pi1H} that $q=1$, $\pi_1(K)\cong 1$, $\Gamma$ is a cyclic group, and
\begin{equation}\labell{e:pipK}
\pi_p(K)\cong\pi_p(H) \text{ for $p\geq 2$.}
\end{equation}
From Hurewicz' Theorem and general properties of the homology of compact $1$-connected semi-simple Lie groups (namely, $H^2(K;\RR)=0$ and $H^3(K;\RR)\neq 0$), it follows from Equations~\eqref{e:pipH} and \eqref{e:pipK} that
\begin{equation}\labell{e:pi2H}
\pi_3(\SS^{l_1})\times\pi_3(\SS^{l_2})\text{ is finite, and}
\end{equation}
\begin{equation}\labell{e:pi3K}
K\cong 1 \text{ or } \dim K \geq 3 \text{ and } \pi_4(\SS^{l_1})\times\pi_4(\SS^{l_2}) \text{ is infinite.}
\end{equation}
Since $l_1\geq 3$, Equation~\eqref{e:pi2H} in fact forces $l_1>3$.  Since $l_1$ and $l_2$ are odd, we have $l_2\geq l_1\geq 5$. But then, Equation~\eqref{e:pi3K} forces $K\cong 1$.  Thus, $H\cong\SS^1$.

By Equation~\eqref{e:dim}, we have $k\geq 10$.  Assume that $M\in\NN$ such that $10\leq M\leq k$.  Then, by Equation~\eqref{e:pipH}, for all $p=3,\dots,M-1$, we have $\pi_p(\SS^{l_1})\times\pi_p(\SS^{l_2})\cong 1$.  This implies $M\leq l_1\leq l_2$.  It follows from the right-hand-side of Equation~\eqref{e:dim} that $2M\leq k$.  It follows that $k=\infty$, which is absurd.  We conclude that $l_1=1$.

Next, assume that $l_2>1$ (and hence $l_2\geq 3$).  Equation~\eqref{e:dim} reduces to 
\begin{equation}\labell{e:dim2}
3\leq l_2=k-\dim H.
\end{equation}
Looking at the long exact sequences of homotopy groups as above, we obtain the following information:
\begin{equation}\labell{e:exact seq}
1\to\pi_2(X)\to\ZZ\overset{f}{\longrightarrow}\ZZ\to\pi_1(X)\to 1 \text{ is exact,}
\end{equation}
\begin{equation}\labell{e:pipl2}
\pi_p(\SS^{l_2})\cong\pi_p(X) \text{ for $p\geq 3$},
\end{equation}
\begin{equation}\labell{e:pi1X}
\pi_1(X)\cong\ZZ_q \text{ for some $q$, or is trivial,}
\end{equation}
\begin{equation}\labell{e:pi2X}
\pi_2(X)\cong\pi_1(H).
\end{equation}
It follows from Equation~\eqref{e:exact seq} that $\pi_2(X)\cong\ZZ$ or is trivial.  In the former case, $f$ must be the zero homomorphism, in which case $\pi_1(X)\cong\ZZ$, contradicting Equation~\eqref{e:pi1X}.  Thus, $\pi_2(X)\cong 1$, and Equation~\eqref{e:pi2X} implies that the identity component $H_0$ of $H$ is simply-connected. Thus $H$, being compact, cannot have dimension $1$ or $2$.

Assume that $\dim H\geq 3$.  It follows from the long exact sequence of homotopy groups induced by the fibration $\Gamma\to\TT^r\times K\to H_0\cong(\TT^r\times K)/\Gamma$, where $\Gamma$ is finite and $K$ is compact, connected, and semi-simple, that $r=0$ and $\Gamma\cong 1$, and so $H_0\cong K$.  That is, $H_0$ is compact, $1$-connected, and semi-simple.  From Equation~\eqref{e:dim2} we know that $k\geq 6$, and thus by Equation~\eqref{e:pipl2} and the second long exact sequence of homotopy groups above:
\begin{equation}\labell{e:pipl2-2}
\pi_p(\SS^{l_2})\cong\pi_{p-1}(H) \text{ for $p=3,4,5$}.
\end{equation}
Since $H_0$ is compact, $1$-connected and semi-simple, from Equation~\eqref{e:pipl2-2} we have that $\pi_2(H)\cong\pi_3(\SS^{l_2})$ is finite.  Thus, $l_2>3$, and hence $l_2\geq 5$ and $\pi_2(H)\cong 1$.  At the same time, since the cohomology $H^3(H_0;\RR)$ is infinite and $H_0$ is $2$-connected, Hurewicz' Theorem implies that $\pi_3(H)\cong\pi_4(\SS^{l_2})$ is infinite.  This contradicts $l_2\geq 5$.  We are left with the case that $\dim H=0$.
\end{proof}

\begin{proof}[Proof of Main Theorem]
Let $(M,\omega)$ be a symplectic manifold admitting a Hamiltonian $\SS^1$-action with momentum map $\Psi$.  Fix a value $a\in\Psi(M)$.  If  $a$ is a regular value, then it is well-known that $M\red{a}\SS^1$ is representable: we have automatically from Equation~\eqref{e:Ham} that $\SS^1$ acts locally freely on $\Psi^{-1}(a)$, a submanifold of $M$, resulting in the orbifold $\Psi^{-1}(a)/\SS^1$.

Assume that $a$ is a critical value of $\Psi$, and that $M\red{a}\SS^1$ is not weakly unrepresentable: it is diffeomorphic to $N/G$ for some Lie group $G$ and manifold $N$ on which $G$ acts effectively and properly satisfying the orbifold condition near certain links as described in the definition of weakly unrepresentable.  Without loss of generality, assume that $a=0$.  Let $z\in\Psi^{-1}(0)$ be an $\SS^1$-fixed point (which must exist).  Let $\dim M=2n$.  By the slice theorem, there is an $\SS^1$-invariant open neighbourhood of $z$ that is $\SS^1$-equivariantly diffeomorphic to an effective, linear, symplectic representation $V$ of $\SS^1$, with weights $\alpha_1,\dots,\alpha_n$, equipped with the standard quadratic momentum map $\Phi$ as in Equation~\eqref{e:momentum}.  Hereafter we focus on the representation $V$, identifying it with $\CC^n$ for convenience.

Assume without loss of generality that $\alpha_1,\dots,\alpha_j\neq 0$ and $\alpha_{j+1},\dots,\alpha_n=0$ ($0<j\leq n$).  Then the zero-set of $\Phi$ is $\SS^1$-equivariantly isomorphic to $\CC^{n-j}$ or $$\Cone(\SS^l\times\SS^m)\times\CC^{n-j}$$ as smooth stratified spaces for some positive odd integers $l$ and $m$.  The former only occurs if the weights $\alpha_1,\dots,\alpha_j$ are all positive or all negative, and the resulting symplectic quotient is diffeomorphic to $\CC^{n-j}$.  Assuming that there is at least one negative weight and at least one positive weight, the symplectic quotient is diffeomorphic to $\CC^j\red{0}\SS^1\times\CC^{n-j}$, where $\SS^1$ acts on $\CC^j$ via the restricted action on $\CC^j\times\{0\}$.

From the proof of Lemma~\ref{l:links}, $\CC^j\red{0}\SS^1$ is diffeomorphic to $\RR^{k+1}/H$ where $k\leq \dim N$ and $H$ is a compact subgroup of $G$.  By Proposition~\ref{p:main} applied to the $\SS^1$-action on $\CC^j$, we conclude that there is exactly one negative weight or exactly one positive weight, and that $\dim H=0$.  Hence $\CC^j\red{0}\SS^1$ is diffeomorphic to an orbifold $\RR^{k+1}/H$.

Since $z$ is an arbitrary fixed point of $\Psi^{-1}(a)$, it follows that $M\red{a}\SS^1$ is locally diffeomorphic to orbit spaces of linear actions of finite groups.  It follows that $M\red{a}\SS^1$ is an orbifold.  Since every (effective) orbifold is representable, this completes the proof.
\end{proof}

\appendix
\section{Differential Spaces and Minimality of Orbit-Type Stratifications}\labell{a:diff sp}

We begin by reviewing the theory of differential spaces.  For more details, see \cite{sniatycki}.

\begin{definition}[Differential Space]\labell{d:diff space}
Let $X$ be a set.  A \textbf{(Sikorski) differential structure}
on $X$ is a family
$\mathcal{F}$ of real-valued functions satisfying the following two conditions:
\begin{enumerate}
\item\labell{i:smooth compatibility} \textbf{(Smooth Compatibility)} For any positive integer $k$, functions
$f_1,...,f_k\in\mathcal{F}$, and $g\in\CIN(\RR^k)$, the composition
$g(f_1,...,f_k)$ is contained in $\mathcal{F}$.

\item\labell{i:locality} \textbf{(Locality)} Equip $X$ with the initial topology induced by $\mathcal{F}$.  Let $f \colon X\to\RR$ be a function such that there exist an open cover $\{U_\alpha\}$ of $X$ and for each $\alpha$, a function $g_\alpha\in\mathcal{F}$ satisfying $f|_{U_\alpha}=g_\alpha|_{U_\alpha}.$  Then $f\in\mathcal{F}$.
\end{enumerate}
A set $X$ equipped with a differential structure $\mathcal{F}$ is called
a \textbf{(Sikorski) differential space} and is denoted by $(X,\mathcal{F})$.
\end{definition}

\begin{definition}[Smooth Map]\labell{d:smooth map}
Let $(X,\mathcal{F}_X)$ and $(Y,\mathcal{F}_Y)$ be two differential
spaces.  A map $F \colon X\to Y$ is \textbf{smooth}
if $F^*\mathcal{F}_Y\subseteq\mathcal{F}_X$.
The map $F$ is called a \textbf{diffeomorphism}
if it is a bijection and both it and its inverse are smooth.
\end{definition}

\begin{remark}
Differential spaces along with smooth maps form a category, which we denote by $\diffsp$.
\end{remark}

\begin{definition}[Differential Subspace]\labell{d:diff subspace}
Let $(X,\mathcal{F})$ be a differential space, and let $Y\subseteq X$ be
any subset.  Then $Y$ comes equipped with a differential structure $\mathcal{F}_Y$ induced by $\mathcal{F}$ as follows:
A function $f\in\mathcal{F}_Y$ if and only if there is a covering $\{U_\alpha\}$ of $Y$ by open sets of $X$ such that for each $\alpha$, there exists $g_\alpha\in\mathcal{F}$ satisfying $$f|_{U_\alpha\cap Y}=g_\alpha|_{U_\alpha\cap Y}.$$  We call $(Y,\mathcal{F}_Y)$ a \textbf{differential subspace} of $X$.  The initial topology on $Y$ induced by $\mathcal{F}_Y$ coincides with the subspace topology on $Y$ (see \cite[Lemma 2.28]{watts-phd}).  If $Y$ is a closed differential subspace of $\RR^n$, then $\mathcal{F}_Y$ is the set of restrictions of smooth functions on $\RR^n$ to $Y$ (see \cite[Proposition 2.36]{watts-phd}).
\end{definition}

\begin{definition}[Subcartesian Space]\labell{d:subcart}
A \textbf{subcartesian space} is a paracompact, second-countable, Hausdorff differential space $(X,\CIN(X))$ with an open cover $\{U_\alpha\}$ such that for each $\alpha$, there exist $n_\alpha\in\NN$ and a diffeomorphism $\varphi_\alpha\colon U_\alpha\to\tilde{U}_\alpha\subseteq\RR^{n_\alpha}$ onto a differential subspace $\tilde{U}_\alpha$ of $\RR^{n_\alpha}$.
\end{definition}

\begin{definition}[Quotient Differential Structure]\labell{d:diff quotient}
Let $(X,\mathcal{F})$ be a differential space, let $\sim$ be an
equivalence relation on $X$, and let $\pi:X \to X/\!\!\sim$
be the quotient map.
Then $X/\!\!\sim$ obtains a differential
structure $\mathcal{F}_\sim$, called the \textbf{quotient differential structure},
comprising all functions $f\colon X/\!\sim\,\to\RR$ each of whose pullback by $\pi$ is in $\mathcal{F}$.  In general, the functional topology generated by $\mathcal{F}_\sim$ is coarser than the quotient topology.
\end{definition}

\begin{definition}[Product Differential Structure]\labell{d:diff prod}
Let $(X,\mathcal{F})$ and $(Y,\mathcal{G})$ be two differential spaces.
The \textbf{product differential structure} on $X\times Y$
is given by those functions
that locally (with respect to the product topology) are of the form  $F ( f\circ\pr_X , g\circ\pr_Y )$
for $F \in \CIN(\RR^2)$, $f \in \mathcal{F}$, and $g \in \mathcal{G}$.
Here,
$\pr_X$ and $\pr_Y$ are the projections onto $X$ and $Y$, respectively.
In particular, the projection maps are functionally smooth.
\end{definition}

\begin{example}[Manifold]\labell{x:manifold}
Any manifold $M$ has a natural differential structure given by $\CIN(M)$.
\end{example}

\begin{example}[Orbit Space]\labell{x:orbitspace}
Let $G$ be a Lie group acting on a manifold $M$.  Then the quotient differential structure on the orbit space $M/G$ consists of all functions each of which pulls back to a $G$-invariant smooth function on $M$.  If $G$ acts on $M$ properly, then $M/G$ is in fact a subcartesian space (see \cite[Theorem 4.3.4]{sniatycki} for the connected group case).  Indeed, fix $x\in M$.  By the slice theorem \cite{koszul}, \cite{palais}, there exists $G$-invariant open neighbourhood $U$ of $x$, and a $G$-equivariant diffeomorphism from $U$ to $G\times_H V$, where $H$ is the stabiliser of $x$, $V$ is the normal space to the orbit $G\cdot x$ equipped with its isotropy action of $H$, and $H$ acts diagonally on $G\times V$ with quotient $G\times_H V$.  It follows that $U/G$ is an open neighbourhood of $G\cdot x$ in the orbit space $M/G$, and that it is diffeomorphic to $V/H$ equipped with its corresponding quotient differential structure.

We now apply the classical theorem of Schwarz \cite{schwarz}.  By averaging, we may assume that $H$ acts on $V$ orthogonally.  The Hilbert map $\sigma=(\sigma_1,...,\sigma_k)\colon V\to\RR^k$ descends to a proper topological embedding of $V/H$ as a closed subset of $\RR^k$; here $\sigma_1,...,\sigma_k$ is a minimal generating set of the ring of $H$-invariant polynomials on $V$.  Moreover, $\sigma^*(\CIN(\RR^k))=\CIN(V)^H$, which implies that the quotient differential structure on $V/H$ is equal to the subspace differential structure induced by $\RR^k$.  It now follows that $M/G$ is subcartesian.
\eoe
\end{example}

Next, we review stratified spaces, especially in the context of smooth structures.  For more details, see \cite{pflaum}, \cite{sniatycki}.

\begin{definition}[Smooth Decomposition]\labell{d:decomposition}
Let $X$ be a Hausdorff paracompact second-countable topological space.  A partition $\mathcal{M}$ of $X$ is a \textbf{decomposition} if it is locally finite, each element of the partition (called a \textbf{piece}) is locally closed in $X$, each piece is a smooth manifold with respect to the subspace topology, and the following \textbf{frontier condition} is satisfied:
\begin{quote}
For every two distinct pieces $M_1$ and $M_2$ of $\mathcal{M}$, if $\overline{M_1}\cap M_2\neq\emptyset$, then $M_2\subseteq\overline M_1$, and we denote this relationship by $M_2<M_1$.
\end{quote}
We call $(X,\mathcal{M})$ a \textbf{decomposed space}.  If $X$ is a subcartesian space, then a decomposition $\mathcal{M}$ of $X$ is a \textbf{smooth decomposition} if the smooth structure on each piece is induced by $X$.

If $(X,\mathcal{M})$ is a decomposed space, then for $x\in X$, we define the \textbf{depth} of $x$ as $$\operatorname{dp}(x):=\sup\{n\in\NN\mid\exists M_0,\dots,M_n\text{ such that }x\in M_0<\dots<M_n\}.$$

Fixing $X$, decompositions of $X$ form a partial order: $\mathcal{M}\leq \mathcal{N}$ if for every $N\in\mathcal{N}$ there exists $M\in\mathcal{M}$ such that $N\subseteq M$; in which case we say that $\mathcal{M}$ is \textbf{coarser} than $\mathcal{N}$, or that $\mathcal{N}$ is \textbf{finer} than $\mathcal{M}$, or that $\mathcal{N}$ is a \textbf{refinement} of $\mathcal{M}$.
\end{definition}

\begin{definition}[Smooth Stratification]\labell{d:stratification}
Let $X$ be a Hausdorff paracompact second-countable topological space.  A \textbf{stratification} of $X$ is a map $\mathcal{S}$ from $X$ to the set of set germs at points of $X$, $x\mapsto S_x$, satisfying:
\begin{quote}
For every $x\in X$ there is an open neighbourhood $U$ of $x$ and a decomposition $\mathcal{M}_U$ of $U$ so that $S_y$ is equal to the set germ of $M_y$ for each $y\in U$, where $M_y\in\mathcal{M}_U$ is the piece containing $y$.
\end{quote}
We call $(X,\mathcal{S})$ a \textbf{stratified space}.  If $X$ is a subcartesian space, then a stratification $x\mapsto S_x$ is a \textbf{smooth stratification} if each decomposition $\mathcal{M}_U$ above is smooth, and we call $(X,\mathcal{S})$ a \textbf{smooth stratified space}.
\end{definition}

\begin{remark}\labell{r:stratification}
In the literature, other definitions of stratification appear.  The benefit of the above version, due to Mather, is that the notion of a stratification is local.  This allows us to identify decomposed spaces that otherwise are technically different.  For example, consider two decompositions of $\RR$: the first having the origin as one piece and its complement as another, and a second having the origin as one piece, and the two connected components of the complement as two more pieces.  Either one induces the same stratification, which is what we typically prefer.

Given the above definitions, a stratification $\mathcal{S}\colon x\mapsto S_x$ on $X$ induces a unique decomposition $\mathcal{M}$ on $X$ satisfying:
\begin{quote}
For each open set $U$ of $X$, and each decomposition $\mathcal{N}$ inducing the stratification on $U$, the restriction of $\mathcal{M}$ to $U$ is coarser than $\mathcal{N}$.
\end{quote}
The pieces of $\mathcal{M}$ in this case are called \textbf{strata}, and are determined by their dimension and their depth.  See \cite[Proposition 1.2.7]{pflaum} for more details.  We will identify a stratification with this unique decomposition.

Finally, we mention without proof that the notion of smooth structure in \cite[Section 1.3]{pflaum} via charts on a stratified space is equivalent to the differential structure on a smooth stratified space defined here, in the sense that one induces the other.
\end{remark}

\begin{definition}[Smooth Stratified Map]\labell{d:stratified map}
Let $(X,\mathcal{M})$ and $(Y,\mathcal{N})$ be stratified spaces.  A continuous map $f\colon X\to Y$ is \textbf{stratified} if for every stratum $M\in\mathcal{M}$ there exists $N\in\mathcal{N}$ such that $f(M)\subseteq N$, and $f|_M$ is smooth.  If $(X,\mathcal{M})$ and $(Y,\mathcal{N})$ are smooth stratified spaces, and $f$ is both smooth and stratified, then it is called a \textbf{smooth stratified map}.  Finally, if $f$ is a diffeomorphism and stratified such that $f^{-1}$ is stratified, then it is called a \textbf{stratified diffeomorphism}.
\end{definition}

\begin{definition}[Whitney Conditions]\labell{d:whitney}
Fix a manifold $X$ and let $M$ and $N$ be submanifolds of $X$.  Then $M$ and $N$ satisfy \textbf{Whitney Condition A} at $x\in M$ if:
\begin{quote}
For any sequence $(n_k)$ in $N$ converging to $x$, if the sequence of tangent spaces $T_{n_k}N$ converges to a subspace $V\subseteq T_xX$ in the corresponding Grassmannian, then $T_xM\subseteq V$.
\end{quote}
Now assume that $X=\RR^n$.  Then $M$ and $N$ satisfy \textbf{Whitney Condition B} at $x\in M$ if:
\begin{quote}
For any two sequences $(m_k)$ in $M$ and $(n_k)$ in $N$ satisfying: 
\begin{enumerate}
\item $m_k\neq n_k$ for each $k$,
\item $\lim m_k=\lim n_k=x$, 
\item the sequence of line segments from $m_k$ to $n_k$ converges in the corresponding projective space to a line $\ell$,
\item the sequence of tangent spaces $T_{n_k}N$ converges to $V\subseteq T_xX$ in the corresponding Grassmannian;
\end{enumerate}
we have $\ell\subseteq V$.
\end{quote}
We extend Whitney Condition B to subcartesian spaces via their local embeddings in the obvious way, and this is independent of the local embeddings chosen.  We call a stratified subcartesian space \textbf{Whitney stratified} if it satisfies Whitney Condition B, which implies that it satisfies Whitney Condition A; see, for example, \cite[Lemma 1.4.5]{pflaum}.
\end{definition}

\begin{definition}[Smooth Local Triviality and Cone Spaces]\labell{d:triviality}
Let $(X,\mathcal{M})$ be a smooth stratified space.  Then $X$ is \textbf{smoothly locally trivial} if for every $M\in\mathcal{M}$ and $x\in M$,
\begin{enumerate}
\item there is an open neighbourhood $U$ of $x$ such that the partition of $U$ into manifolds $M\cap U$ ($M\in\mathcal{M}$) yields a smooth stratification of $U$,
\item there exists a subcartesian space $X'$ with smooth stratification $\mathcal{M}'$ which contains a singleton set $\{y\}\in\mathcal{M}'$,
\item there exists a stratified diffeomorphism $\varphi:U\to (M\cap U)\times X'$ sending $x$ to $(x,y)$.
\end{enumerate}
Note that the strata of $(M\cap U)\times X'$ are the sets $(M\cap U)\times M'$ where $M'\in\mathcal{M}'$.  Often in applications, the differential subspace $X'\smallsetminus\{y\}$ with its induced smooth stratification is stratified-diffeomorphic to a cylinder $(0,1)\times L$, where $L$ is a smooth stratified space called a \textbf{link}.

We define \textbf{cone spaces} inductively by dimension.  A one-point space is a cone space.  Next, if $(X,\mathcal{M})$ is a smoothly locally trivial smooth stratified space such that each $X'$ above is stratified-diffeomorphic to $\Cone(L)$ for some compact cone space  $L$ that is a differential subspace of $\RR^n$ for some $n$, then $X$ is a cone space.  We define $\Cone(L)$ as follows: since $L$ is a differential subspace of $\RR^n$, we can embed it into some open ball of the $n$-sphere, $\iota\colon L\to\SS^n\subset\RR^{n+1}$.  Then $\Cone(L)$ is the differential subspace of $\RR^{n+1}$ given by $$\Cone(L):=\{tx\in\RR^{n+1}\mid x\in\iota(L),\;t\in[0,\infty)\}.$$
\end{definition}

\begin{example}[Orbit-Type Stratification - Part I]\labell{x:ots1}
Let $G$ be a Lie group acting properly on a manifold $M$.  Define for any closed subgroup $H$ of $G$ the subset of \textbf{orbit-type} $(H)$ by $$M_{(H)}:=\{x\in M\mid \exists g\in G \text{ such that }\Stab_G(x)=gHg^{-1}\}.$$  Then the collection of all connected components of all (non-empty) subsets $M_{(H)}$ induce a smooth Whitney stratification of $M$, called the \textbf{orbit-type stratification} (see \cite[Theorem 2.7.4]{DK}).  Moreover, this stratification descends via the quotient map $\pi\colon M\to M/G$ to a smoothly locally trivial smooth stratification on $M/G$, also called the \textbf{orbit-type stratification} in which the strata are induced by the connected components of $(M/G)_{(H)}:=\pi(M_{(H)})$ as $H$ runs over closed subgroups of $G$ such that $M_{(H)}$ is non-empty (see \cite[Theorem 4.3.5]{sniatycki} -- while the proof is for connected groups, it works for general proper actions as well).

As simple explicit examples, let $\ZZ_n$ act on $\RR^2$ by rotation.  Then, with respect to the orbit-type stratifications, $\RR^2/\ZZ_2$ is a cone space stratified-diffeomorphic to the upper half cone given by $x^2+y^2=z^2,\;z\geq 0$ in $\RR^3$, whereas $\RR^2/\ZZ_n$ ($n>2$) is only a smoothly locally trivial smooth stratified space; all of these have the circle as link.
\eoe
\end{example}

\begin{example}[Orbit-Type Stratification - Part II]\labell{x:ots2}
Let $G$ be a Lie group acting properly and in a Hamiltonian fashion on a symplectic manifold $(M,\omega)$.  Then the inclusion map $j\colon M\red{0}G\to M/G$ induces a Whitney stratification on $M\red{0}G$, called the \textbf{orbit-type stratification}, induced by connected components of $(M\red{0}G)_{(H)}:=j^{-1}(\pi(M_{(H)})$ as $H$ runs over closed subgroups of $G$, making $j$ a smooth stratified map.  This is the main result of \cite{SL}.  In fact, this stratification is smoothly locally trivial (see \cite[Theorem 6.3.4]{sniatycki} for the connected group case; the general case follows from \cite[Section 6]{SL}).
\end{example}

The goal of this section is to prove that the orbit-type stratification of $M/G$ in the case of a proper action of $G$ on $M$, as well as that on $M\red{0}G$ in the case of a proper Hamiltonian action, are invariants of the differential structures on the respective spaces; in particular, the stratifications are preserved by diffeomorphisms.  For orbifolds, this is proven in \cite{watts-orb}.  This requires using \'Sniatycki's theory of vector fields on a subcartesian space, and the fact that these stratifications are minimal among all real-analytic Whitney stratifications on each space.  For $M/G$, minimality  was proven by Bierstone \cite{bierstone1}, \cite{bierstone2}.  It seems to be a folk theorem that the same is true on $M\red{0}G$ (see, for example, \cite[Remark 8.3.3]{OR} or \cite[Remark 2.11]{sjamaar}).  And so a secondary goal of this appendix is to give a proof of this latter claim.  We begin by reviewing tangent bundles and vector fields on subcartesian spaces.

\begin{definition}[Tangent Bundles and Global Derivations]\labell{d:tangent}
Let $X$ be a subcartesian space.  Given a point $x\in X$, a \textbf{derivation} of $\CIN(X)$ at $x$ is a linear map $v\colon\CIN(X)\to\RR$ that satisfies Leibniz' rule: for all $f,g\in\CIN(X)$, $$v(fg)=f(x)v(g)+g(x)v(f).$$  The set of all derivations of $\CIN(X)$ at $x$ forms a vector space, called the \textbf{(Zariski) tangent space} of $x$, and is denoted $T_xX$. Define the \textbf{(Zariski) tangent bundle} $TX$ to be the (disjoint) union $$TX:=\bigcup_{x\in X}T_xX.$$  Denote the canonical projection $TX\to X$ by $\tau$.

The \textbf{tangent cone} of $X$ at $x$, denoted $T^C_xX$, is defined to be the equivalence class via $\sim$ of curves $\gamma\colon\RR\to X$ such that $\gamma(0)=x$, where $\gamma_1\sim\gamma_2$ if $\dot{\gamma_1}(0)=\dot{\gamma_2}(0)$.  Note that $$\dot{\gamma}(0)(f):=\frac{d}{dt}\Big|_{t=0}f\circ\gamma(t)$$ for all $f\in\CIN(X)$, and hence we can identify the $T^C_xX$ as a subset of $T_xX$.  The tangent cone is closed under scalar multiplication by non-negative real numbers.

A \textbf{(global) derivation} of $\CIN(X)$ is a linear map $Y\colon\CIN(X)\to\CIN(X)$ that satisfies Leibniz' rule: for any $f,g\in\CIN(X)$, $$Y(fg)=fY(g)+gY(f).$$  Denote the $\CIN(X)$-module of all derivations by $\Der\CIN(X)$.

Fix $Y\in\Der\CIN(X)$ and $x\in X$.  An \textbf{integral curve} $\exp(\cdot Y)(x)$ of $Y$ through $x$ is a smooth map from a connected subset $I^Y_x\subseteq\RR$ containing $0$ to $X$ such that $\exp(0Y)(x)=x$, and for all $f\in\CIN(X)$ and $t\in I^Y_x$ we have $$\frac{d}{dt}(f\circ\exp(tY)(x))=(Yf)(\exp(tY)(x)).$$  An integral curve is \textbf{maximal} if $I_x^Y$ is maximal among the domains of all such curves.  We adopt the convention that the map $c\colon\{0\}\to X\colon 0\mapsto c(0)$ is an integral curve of every global derivation of $\CIN(X)$.
\end{definition}

\begin{remark}\labell{r:zariski}
$TX$ is a subcartesian space with differential structure generated by functions $f\circ\tau$ and $df$ where $f\in\CIN(X)$ and $d$ is the differential $df(v):=v(f)$.  The projection $\tau$ is smooth with respect to this differential structure (see \cite{LSW} or \cite[Proposition 3.3.3]{sniatycki}).

Given $x\in X$, the dimension of $T_xX$ is invariant under diffeomorphism: if $\varphi\colon X\to X'$ is a diffeomorphism of differential spaces, then $X'$ is a subcartesian space, and the dimension of $T_{\varphi(x)}X'$ is equal to that of $T_xX$.  Indeed, it is not hard to show that the \textbf{pushforward} $\varphi_*\colon TX\to TX'$ sending $v\in T_xX$ to $\varphi_*v\in T_{\varphi(x)}X'$ is a linear isomorphism on each tangent space.  (Recall that for any $f\in\CIN(X')$, we have $\varphi_*v(f)=v(f\circ\varphi).$)

Global derivations of $\CIN(X)$ are exactly the smooth sections of $\tau\colon TX\to X$ (see \cite[Proposition 3.3.5]{sniatycki}).  Moreover, for any $Y\in\Der\CIN(X)$ and any $x\in X$, there exists a unique maximal integral curve $\exp(\cdot Y)(x)$ through $x$ (see \cite[Theorem 3.2.1]{sniatycki}).
\end{remark}

\begin{definition}[Vector Fields and their Accessible Sets]\labell{d:vector fields}
Let $X$ be a subcartesian space. Let $D$ be a subset of $\RR\times X$ containing $\{0\}\times X$ such that $D\cap(\RR\times\{x\})$ is connected for each $x\in X$.  A map $\phi\colon D\to X$ is a \textbf{local flow} if $D$ is open, $\phi(0,x)=x$ for each $x\in X$, and $\phi(t,\phi(s,x))=\phi(t+s,x)$ for all $x\in X$ and $s,t\in\RR$ for which both sides are defined.

A \textbf{vector field} on $X$ is a derivation $Y$ of $\CIN(X)$ such that the map $(t,x)\mapsto\exp(tY)(x)$, sending $(t,x)$ to the maximal integral curve of $Y$ through $x$ evaluated at $t$, is a local flow.  Denote the set of all vector fields on $X$ by $\vect(X)$.

Let $\mathcal{M}$ be a smooth stratification of $X$.  Then the pair $(X,\mathcal{M})$ is said to \textbf{admit local extensions of vector fields} if for any stratum $M\in\mathcal{M}$, any vector field $Y_M$ on $M$, and any $x\in M$, there exist an open neighbourhood $U$ of $x$ and a vector field $Y\in\vect(X)$ such that $Y_M|_{U\cap M}=Y|_{U\cap M}.$  The \textbf{accessible set} or \textbf{orbit} of $\vect(X)$ through a point $x$, denoted $O^X_x$, is the set of all points $y\in X$ such that there exist vector fields $Y_1,...,Y_k$ and real numbers $t_1,...,t_k\in\RR$ satisfying $$y=\exp(t_kY_k)\circ...\circ\exp(t_1Y_1)(x).$$
Denote by $\mathcal{O}_X$ the set of all accessible sets $\{O^X_x\mid x\in X\}$.
\end{definition}

\begin{remark}\labell{r:vector fields}
Let $X$ be a subcartesian space.  Let $X'$ be another differential space, and let $F\colon X'\to X$ be a diffeomorphism.  Then $F$ induces a bijection between $\vect(X')$ and $\vect(X)$.  Indeed, $F$ induces an isomorphism between the derivations of $\CIN(X')$ and those of $\CIN(X)$.  Moreover, if $Z\in\vect(X')$, then $F_*Z$ is a vector field on $X$: $$\frac{d}{dt}\Big|_{t=t_0}F\circ\exp(tZ)(x)=F_*Z|_{F(x)}.$$  The reverse direction also holds, and so the result follows.

Assume that $X$ is locally compact.  Then the set of all vector fields $\vect(X)$ is a $\CIN(X)$-module; that is, for any $f\in\CIN(X)$ and any vector field $Y\in\vect(X)$, the derivation $fY$ is a vector field (see \cite[Corollary 4.71]{watts-phd}).

Let $\mathcal{M}$ be a smoothly locally trivial smooth stratification of $X$.  Then $(X,\mathcal{M})$ admits local extensions of vector fields (see \cite[Theorem 4.5]{LS} or \cite[Proposition 4.1.5]{sniatycki}).  It follows then that the set of accessible sets $\mathcal{O}_X$ forms a smooth decomposition of $X$, which induces a stratification for which $\mathcal{M}$ is a refinement.  In particular, if $\mathcal{M}$ is minimal, then $\mathcal{M}$ is induced by $\mathcal{O}_X$. See \cite[Theorem 4.6]{LS} or \cite[Theorem 4.1.6]{sniatycki}.

Finally, $\mathcal{O}_X$ induces a stratification of $X$ if and only if it is locally finite and each $O\in\mathcal{O}_X$ is locally closed in $X$; see \cite[Theorem 4.3]{LS} or \cite[Corollary 4.1.3]{sniatycki}.
\end{remark}

\begin{example}[Orbit-Type Stratification]\labell{x:ots3}
Let $G$ act properly on a manifold $M$.  Then as mentioned in Example~\ref{x:ots1}, the orbit-type stratification on $M/G$ is smoothly locally trivial.  From Remark~\ref{r:vector fields} it follows that the orbit-type stratification is a refinement of the stratification $\mathcal{O}_{M/G}$ by accessible sets of $\vect(M/G)$.  Since the orbit-type stratification is minimal, it follows that the orbit type stratification is induced by the decomposition $\mathcal{O}_{M/G}$.  See \cite[Section 4.4]{sniatycki} for details for connected groups, or \cite{watts-orb} for the orbifold case.
\end{example}

\begin{theorem}[The Orbit-Type Stratification on $M/G$ is a Differential Space Invariant]\labell{t:ots invt}
Let $G$ act properly on a manifold $M$.  Then the orbit-type stratification on $M/G$ is an invariant of the differential structure $\CIN(M/G)$.
\end{theorem}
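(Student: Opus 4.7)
The plan is to reduce the statement to the fact, already recorded in Example~\ref{x:ots3}, that the orbit-type stratification on $M/G$ coincides with the stratification induced by the decomposition $\mathcal{O}_{M/G}$ of $M/G$ into accessible sets of the $\CIN(M/G)$-module $\vect(M/G)$, and then to exploit the manifest naturality of $\vect(-)$ under diffeomorphisms of subcartesian spaces recorded in Remark~\ref{r:vector fields}.

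First I would make explicit what invariance means here: one must show that if $\varphi\colon(M/G,\CIN(M/G))\to(Y,\CIN(Y))$ is a diffeomorphism of differential spaces (where $Y$ is any subcartesian space, e.g.\ another orbit space $N/K$), then $\varphi$ maps the pieces of the orbit-type stratification of $M/G$ bijectively onto a smooth stratification of $Y$ that does not depend on any auxiliary choice. By Example~\ref{x:ots3}, the orbit-type stratification on $M/G$ is exactly the stratification whose strata are the connected components of the accessible sets $O^{M/G}_{[x]}\in\mathcal{O}_{M/G}$; the key input there is that this stratification is smoothly locally trivial, hence admits local extensions of vector fields, and is minimal among Whitney stratifications.

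Next, because the notions of derivation, integral curve, and local flow are defined purely in terms of the differential structure, $\vect(-)$ is a functorial construction on subcartesian spaces with respect to diffeomorphisms: Remark~\ref{r:vector fields} yields a bijection $\varphi_{*}\colon\vect(M/G)\to\vect(Y)$ such that for every $Z\in\vect(M/G)$ and every $[x]\in M/G$,
\begin{equation*}
\varphi\circ\exp(tZ)([x])=\exp(t\varphi_{*}Z)(\varphi([x])),
\end{equation*}
on the common domain of definition. Composing a finite number of such flows, we conclude that $\varphi(O^{M/G}_{[x]})=O^{Y}_{\varphi([x])}$ for every $[x]$, so $\varphi$ carries $\mathcal{O}_{M/G}$ bijectively onto $\mathcal{O}_{Y}$ and the induced stratifications correspond under $\varphi$. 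Since connected components are topological, the connected-component refinements also correspond.

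The substantive content is therefore entirely packaged in Example~\ref{x:ots3} (minimality of the orbit-type stratification, together with smooth local triviality and the resulting local extension of vector fields); given that, the invariance claim is an immediate consequence of the intrinsic definition of $\vect(M/G)$. The only point requiring mild care is verifying that $\varphi_{*}$ really sends local flows to local flows and not just derivations to derivations, but this follows directly from the characterization of integral curves by the chain rule identity displayed in Remark~\ref{r:vector fields}, so I anticipate no genuine obstacle.
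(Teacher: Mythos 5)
Your argument is correct and is exactly the paper's route: the paper's proof of Theorem~\ref{t:ots invt} is the one-line observation that the claim follows from Remark~\ref{r:vector fields} and Example~\ref{x:ots3}, i.e.\ that the orbit-type stratification is induced by the accessible sets of $\vect(M/G)$ and that vector fields and their flows are intrinsic to the differential structure. You have simply written out the details that the paper leaves implicit, and they are all sound.
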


\begin{proof}
This follows immediately from Remark~\ref{r:vector fields} and Example~\ref{x:ots3}.
\end{proof}

We now focus on the symplectic quotient $M\red{0}G$ for a Hamiltonian $G$-action.

\begin{proposition}\labell{p:minimal}
Let $(X,\mathcal{N})$ be a smoothly locally trivial smooth stratified space.  For each $x\in X$, denote by $N_x\in\mathcal{N}$ the stratum containing $x$.  If for every $x\in X$ we have $T^C_xX=T_xN_x$, then $\mathcal{N}$ is minimal.
\end{proposition}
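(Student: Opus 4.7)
The plan is to show that the hypothesis forces $\mathcal{N}$ to coincide with the decomposition $\mathcal{O}_X$ of $X$ by accessible sets of $\vect(X)$; once that is in hand, Remark~\ref{r:vector fields} closes the proof, since every smoothly locally trivial smooth stratification refines $\mathcal{O}_X$, so any competitor $\mathcal{N}'$ refines $\mathcal{N}$, which is minimality in the partial order of Definition~\ref{d:decomposition}.

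The crucial observation is that every $Y\in\vect(X)$ is pointwise tangent to strata. For $y\in X$, the derivation $Y(y)$ is by construction the derivative at $t=0$ of the smooth integral curve $\exp(\cdot Y)(y)$, so $Y(y)\in T^C_yX=T_yN_y$ by hypothesis. A short verification (derivations in $T_yN_y$ annihilate any smooth function on $X$ vanishing on $N_y$) upgrades this to the fact that $Y$ restricts to a smooth vector field $Y|_N\in\vect(N)$ on each stratum $N$, whose integral curves inside $N$ satisfy the same ODE as those of $Y$ on points of $N$. From this I would deduce that an integral curve $\gamma(t)=\exp(tY)(x)$ cannot escape its starting stratum by an exit-time argument: setting $T=\sup\{t\geq 0:\gamma([0,t])\subseteq N_x\}$, continuity places $\gamma(T)\in\overline{N_x}$; if $\gamma(T)\in N_x$, local ODE uniqueness inside the manifold $N_x$ extends $\gamma$ past $T$ within $N_x$ and contradicts maximality, while if $\gamma(T)$ lies in some lower stratum $N'<N_x$, local ODE uniqueness applied to $Y|_{N'}$ forces $\gamma(t)\in N'$ for $t$ just below $T$ as well, contradicting $\gamma(t)\in N_x$ there. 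Hence $O^X_x\subseteq N_x$ for every $x\in X$.

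For the reverse inclusion, smooth local triviality supplies local extensions of vector fields from $N_x$ to $X$ (Remark~\ref{r:vector fields}); path-connectedness of the manifold $N_x$, together with a compactness argument along a chosen path from $x$ to a given $y\in N_x$, expresses $y$ as the endpoint of a finite concatenation of such local flows, each of which remains in $N_x$ by the previous step. Thus $\mathcal{N}=\mathcal{O}_X$, and minimality follows. The main obstacle is the exit-time argument; in particular, some care is needed to verify that $Y|_{N'}$ is genuinely a smooth vector field on each lower-dimensional stratum $N'$ appearing as a limit of $\gamma$, so that ODE uniqueness actually applies there. Everything else is largely formal, once the convention that strata are connected is in force (as it is for the orbit-type stratifications of interest in this paper).
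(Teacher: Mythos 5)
Your argument is correct in its main mechanism but takes a genuinely different route from the paper, and it delivers a slightly different conclusion. The paper argues by contraposition directly at the level of decompositions: a strictly coarser stratification $\mathcal{M}<\mathcal{N}$ must have a piece $M$ that is a manifold containing some stratum $N_1$ of $\mathcal{N}$ of strictly smaller dimension in the frontier of another stratum, and a curve in $M$ through $x\in N_1$ with velocity outside $T_xN_1$ then exhibits an element of $T^C_xX\smallsetminus T_xN_1$. This is a few lines and rules out an \emph{arbitrary} coarser stratification. Your route instead identifies $\mathcal{N}$ with the stratification induced by the accessible sets $\mathcal{O}_X$: the hypothesis $T^C_xX=T_xN_x$ forces every $Y\in\vect(X)$ to be tangent to the strata, the exit-time/uniqueness argument shows integral curves preserve strata (so $O^X_x\subseteq N_x$), and local extensions plus connectedness give the reverse inclusion (this direction is already the content of Remark~\ref{r:vector fields}). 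The individual steps are sound: the integral curve of $Y$ through $y$ is a smooth curve whose velocity at $0$ is $Y(y)\in T^C_yX$; the restriction $Y|_N$ is a genuine vector field on each stratum because derivations in the image of $T_yN_y$ annihilate functions vanishing on $N_y$; and uniqueness of maximal integral curves on subcartesian spaces makes the two-case exit-time argument go through. But the machinery invoked is much heavier than what the paper's proof needs.

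The one place your argument falls short of the statement as written is the last step. The universal coarseness of $\mathcal{O}_X$ (Remark~\ref{r:vector fields}, via \cite{LS}) is available only for competitors that admit local extensions of vector fields, e.g.\ smoothly locally trivial ones; so ``any competitor refines $\mathcal{N}$'' is justified only for that class, and you obtain minimality of $\mathcal{N}$ among such stratifications rather than among all stratifications, which is what the paper's direct argument gives. (You also need to pass to connected components, or work at the level of set germs, to literally identify $\mathcal{N}$ with $\mathcal{O}_X$, as you note.) In compensation, your intermediate conclusion --- that $\mathcal{N}$ is induced by $\mathcal{O}_X$ --- is exactly what Corollary~\ref{c:minimal} extracts downstream from minimality, so for the paper's application your argument suffices; but as a proof of Proposition~\ref{p:minimal} itself you should either restrict the class of stratifications over which minimality is claimed or supplement the final step with an argument handling an arbitrary coarser $\mathcal{M}$.
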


\begin{proof}
We prove the contrapositive of the claim.  Assume that $\mathcal{N}$ is not minimal; that is, there exists a stratification $\mathcal{M}<\mathcal{N}$.  Then, there is some $M\in\mathcal{M}$ such that $M$ contains more than one stratum of $\mathcal{N}$.  Assume that for every distinct $N_1,N_2\in\mathcal{N}$ such that $N_1,N_2\subset M$, we have that $\overline{N_1}\cap N_2=\emptyset$.  Then every stratum $N\subseteq M$ is open in $M$, and hence each $N\subseteq M$ is made up of connected components of $M$.  But it follows from the definition of a stratification that $N=M$, a contradiction.
Thus, there must exist distinct $N_1,N_2\in\mathcal{N}$ such that $N_1,N_2\subseteq M$ and $N_1\subset\overline{N_2}$.
Thus $N_1$ and $N_2$ are (immersed) submanifolds of $M$, and $N_1$ is in the topological boundary $\overline{N_2}\smallsetminus N_2$.  Since $\mathcal{N}$ is smoothly locally trivial, it follows that $\dim N_1<\dim M$. 
Hence for any point $x\in N_1$, there is an open neighbourhood $U\subseteq M$ of $x$ and a path $\gamma\colon\RR\to U$ such that $\gamma(0)=x$ and $\dot{\gamma}(0)\in T_xM\smallsetminus T_xN_1$.  Thus, the tangent cone of $X$ at $x$ is not equal to $T_xN_1$.  
\end{proof}

\begin{theorem}\labell{t:minimal}
Let $G$ be a Lie group acting properly and in a Hamiltonian fashion on a symplectic manifold $(M,\omega)$.  Then the orbit-type stratification of $M\red{0}G$ is minimal.
\end{theorem}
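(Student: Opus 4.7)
The strategy is to apply Proposition~\ref{p:minimal}: at every $[x] \in M\red{0}G$, I want to show $T^C_{[x]}(M\red{0}G) = T_{[x]}N_{[x]}$, where $N_{[x]}$ denotes the orbit-type stratum through $[x]$. The inclusion $T_{[x]}N_{[x]} \subseteq T^C_{[x]}(M\red{0}G)$ is immediate, so the work lies in the reverse inclusion. For this, I invoke the Marle--Guillemin--Sternberg symplectic slice theorem to obtain, near a lift $x \in \Phi^{-1}(0)$ of $[x]$, a local stratified diffeomorphism from a neighbourhood of $[x]$ in $M\red{0}G$ to a neighbourhood of $[0]$ in the linear symplectic reduction $V\red{0}H$, where $V$ is the symplectic slice at $x$ and $H := G_x$ is compact.

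Next, I split $V = V^H \oplus W$, where $W := (V^H)^{\perp_\omega}$ is the symplectic complement of the $H$-fixed subspace; in particular $W^H = \{0\}$. Since $H$ acts trivially on $V^H$, the quadratic momentum map satisfies $\Phi_V(v + w) = \Phi_W(w)$, hence $\Phi_V^{-1}(0) = V^H \times \Phi_W^{-1}(0)$ and $V\red{0}H \cong V^H \times (W\red{0}H)$ as smooth stratified spaces, with the stratum $N_{[x]}$ corresponding to $V^H \times \{[0]\}$. Since tangent cones distribute over products, the theorem reduces to the key claim $T^C_{[0]}(W\red{0}H) = \{0\}$ when $W^H = \{0\}$.

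To establish the key claim, pick a minimal homogeneous generating set $\sigma_1, \dots, \sigma_N$ of $\RR[W]^H$. Since $W^H = \{0\}$ there are no linear $H$-invariants, so each $\sigma_i$ has degree $d_i \geq 2$, and consequently $|\sigma_i(w)| \leq C_i |w|^2$ for $|w| \leq 1$. Define $r([w]) := |w|^2$ using an $H$-invariant inner product on $W$; this descends to a smooth nonnegative function on $W\red{0}H$ vanishing only at $[0]$. For any smooth curve $\gamma\colon\RR \to W\red{0}H$ with $\gamma(0) = [0]$, the composition $r \circ \gamma$ is smooth, nonnegative, and vanishes at $t = 0$, forcing $(r \circ \gamma)'(0) = 0$ and thus $r \circ \gamma(t) = O(t^2)$; the polynomial bound then yields $(\sigma_i \circ \gamma)'(0) = 0$ for every $i$. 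By Schwarz's theorem combined with the description of the quotient differential structure in Example~\ref{x:orbitspace}, every $f \in \CIN(W\red{0}H)$ has the form $F \circ \sigma$ for some $F \in \CIN(\RR^N)$, and the chain rule then gives $\dot\gamma(0)(f) = 0$ for all $f$; hence $\dot\gamma(0) = 0$.

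The most delicate point will be verifying that the presentation $f = F \circ \sigma$ really holds for all $f \in \CIN(W\red{0}H)$, and not merely for $H$-invariant polynomials or for elements of $\CIN(W/H)$ restricted to the zero level set. This should follow by regarding $W\red{0}H$ as a closed differential subspace of $W/H$ and applying Schwarz's theorem on $W$ together with the fact, recorded in the discussion of Example~\ref{x:orbitspace}, that $\CIN(W/H)$ coincides with the restriction of $\CIN(\RR^N)$ via the Hilbert embedding. Once this is in hand, Proposition~\ref{p:minimal} yields minimality of the orbit-type stratification on $M\red{0}G$.
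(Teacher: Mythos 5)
Your argument is correct, but it takes a genuinely different route from the paper. The paper's proof is a short reduction: assuming non-minimality, Proposition~\ref{p:minimal} produces a smooth curve in $M\red{0}G$ whose velocity at some point escapes the tangent space of its stratum; pushing this curve forward along the stratified inclusion $j\colon M\red{0}G\to M/G$ (using that the strata of $M\red{0}G$ are the intersections of $M\red{0}G$ with the strata of $M/G$) contradicts the minimality of the orbit-type stratification of $M/G$, which is Bierstone's theorem. You instead verify the hypothesis of Proposition~\ref{p:minimal} directly: the local normal form reduces everything to $V\red{0}H$ with $H$ compact, the splitting $V=V^H\oplus W$ isolates the cone point, and the absence of linear invariants when $W^H=\{0\}$ forces every smooth curve $\gamma$ through $[0]$ to have vanishing velocity, since $r\circ\gamma=O(t^2)$ and each homogeneous generator $\sigma_i$ is dominated by $r$ near the origin. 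What your approach buys is self-containedness (no appeal to Bierstone) and the stronger explicit pointwise statement $T^C_{[x]}(M\red{0}G)=T_{[x]}N_{[x]}$; it also sidesteps the comparison of tangent vectors of $M\red{0}G$ and $M/G$ under $j_*$ on which the paper's argument leans. The cost is that you need the symplectic slice theorem, Schwarz's theorem, and the identification of $\CIN(W\red{0}H)$ with restrictions of $\CIN(\RR^N)$ under the Hilbert embedding. That last identification -- the point you rightly flag as delicate -- is genuinely needed but standard: $\Phi_W^{-1}(0)/H$ is closed in $W/H$, which Schwarz embeds as a closed differential subspace of $\RR^N$, so the final sentence of Definition~\ref{d:diff subspace} applies; and the subspace structure on $\Phi_W^{-1}(0)/H$ agrees with the quotient structure because an $H$-invariant smooth function on the closed invariant set $\Phi_W^{-1}(0)$ extends to an $H$-invariant smooth function on $W$ (extend arbitrarily, then average over $H$). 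With that supplied, and the routine check that tangent cones of product differential spaces split as products of tangent cones, your proof is complete.
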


\begin{proof}
For a contradiction, assume that $M\red{0}G$ is not minimal.  By Proposition~\ref{p:minimal}, there exist a stratum $N$ of the orbit-type stratification of $M\red{0}G$, $x\in N$, and a smooth curve $\gamma\colon\RR\to M\red{0}G$ such that $\gamma(0)=x$ and $\dot{\gamma}(0)\in T_x(M\red{0}G)\smallsetminus T_xN$.  The inclusion map $j\colon M\red{0}G\to M/G$ is stratified, and moreover satisfies 
\begin{equation}\labell{e:minimal}
j((M\red{0}G)_{(H)})=j(M\red{0}G)\cap(M/G)_{(H)}
\end{equation}
for all closed subgroups $H$ of $G$.  Let $N'$ be the stratum of $M/G$ containing $j(N)$.  By Equation~\eqref{e:minimal} the smooth curve $j\circ\gamma$ satisfies $j_*(\dot{\gamma}(0))\in T_{j(x)}(M/G)\smallsetminus T_{j(x)}N'$, it follows from Proposition~\ref{p:minimal} that the orbit-type stratification of $M/G$ is not minimal.  This is a contradiction.
\end{proof}

\begin{remark}\labell{r:minimal}
Note that in general, for $a\neq 0$, the orbit-type stratification of $M\red{a}G$ is not necessarily minimal.  See \cite[Remark 8.3.3]{OR}.
\end{remark}

\begin{corollary}[The Orbit-Type Stratification on $M\red{0}G$ is Induced by Vector Fields]\labell{c:minimal}
Let $G$ be a Lie group acting properly and in a Hamiltonian fashion on a symplectic manifold $(M,\omega)$.  Then the orbit-type stratification of $M\red{0}G$ is induced by $\mathcal{O}_{M\red{0}G}$, and hence is an invariant of $\CIN(M\red{0}G)$.
\end{corollary}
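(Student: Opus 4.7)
The plan is to mirror the proof that handled the $M/G$ case in Example~\ref{x:ots3} and Theorem~\ref{t:ots invt}, but now with $M\red{0}G$ in place of $M/G$, using Theorem~\ref{t:minimal} as the essential new ingredient. Concretely, I would invoke three facts already in hand: (a) the orbit-type stratification $\mathcal{M}$ on $M\red{0}G$ is smoothly locally trivial (Example~\ref{x:ots2}), (b) any smoothly locally trivial smooth stratification admits local extensions of vector fields, and hence the accessible sets $\mathcal{O}_{M\red{0}G}$ of $\vect(M\red{0}G)$ decompose the space into a stratification that $\mathcal{M}$ refines (Remark~\ref{r:vector fields}), and (c) $\mathcal{M}$ is minimal among such stratifications (Theorem~\ref{t:minimal}).

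First I would note that $M\red{0}G$ is a subcartesian space: $\Phi^{-1}(0)\subseteq M$ inherits a differential subspace structure, and the quotient by $G_0=G$ carries the quotient differential structure, which is subcartesian because proper $G$-actions produce subcartesian orbit spaces (Example~\ref{x:orbitspace}) and the inclusion map $j\colon M\red{0}G\to M/G$ embeds it as a closed differential subspace. This ensures the machinery of vector fields and accessible sets from Definition~\ref{d:vector fields} applies.

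Then I would combine (a) and (b): by Remark~\ref{r:vector fields}, $(M\red{0}G,\mathcal{M})$ admits local extensions of vector fields, so the family $\mathcal{O}_{M\red{0}G}$ is locally finite with locally closed pieces and induces its own stratification $\mathcal{M}^{\vect}$, with $\mathcal{M}$ a refinement of $\mathcal{M}^{\vect}$. By the minimality statement recalled in Remark~\ref{r:vector fields} (a minimal refinement must equal the coarser one), combined with (c), we conclude $\mathcal{M}=\mathcal{M}^{\vect}$. Thus the orbit-type stratification is exactly the one produced by $\mathcal{O}_{M\red{0}G}$.

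Finally, I would read off the invariance clause. The module $\vect(M\red{0}G)$ and hence its accessible sets are defined purely in terms of the differential structure $\CIN(M\red{0}G)$ (by Definition~\ref{d:vector fields} and the naturality observation in Remark~\ref{r:vector fields} that a diffeomorphism $F$ of subcartesian spaces induces a bijection $F_*$ between their vector fields and sends integral curves to integral curves). Any diffeomorphism $M\red{0}G\to X'$ therefore carries $\mathcal{O}_{M\red{0}G}$ bijectively onto $\mathcal{O}_{X'}$, and hence carries the orbit-type stratification onto its image. No step looks difficult once Theorem~\ref{t:minimal} is granted; the only point deserving care is checking that the subcartesian hypothesis is in force so that Remark~\ref{r:vector fields} applies, which is where I expect the bookkeeping (rather than any real obstacle) to sit.
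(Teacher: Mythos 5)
Your proposal is correct and follows essentially the same route as the paper: the paper's proof is precisely the combination of Theorem~\ref{t:minimal} (minimality) with Remark~\ref{r:vector fields} (a minimal smoothly locally trivial stratification is induced by the accessible sets of $\vect(X)$, which are determined by the differential structure), and you have simply spelled out the details, including the harmless check that $M\red{0}G$ is subcartesian. No gaps.
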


\begin{proof}
This follows immediately from Theorem~\ref{t:minimal} and Remark~\ref{r:vector fields}.
\end{proof}

\section{Lie Groupoids and their Classifying Spaces}\labell{a:gpds}

In this appendix, our aim is to prove that the classifying spaces of two Morita equivalent Lie groupoids are homotopy equivalent, and in the case of an action groupoid $G\ltimes M$, that the classifying space is homotopy equivalent to the Borel construction $EG\times_G M$.  The latter seems to be another folk theorem, although it follows more-or-less from the work of Segal \cite{segal1}, \cite{segal2}.  See also \cite[Section 2]{LU}, \cite[Section 4]{moerdijk}, and \cite[Section 1.4]{ALR}.

\begin{definition}[Lie Groupoid]\labell{d:gpd}
A \textbf{Lie groupoid} $\mathcal{G}=(G_1\toto G_0)$ is a category in which the objects $G_0$ and the arrows $G_1$ are smooth manifolds (generally we do not require $G_1$ to be Hausdorff; however, for our purposes, $G_1$ always will be Hausdorff), all arrows are invertible, composition $m\colon G_2:=G_1\,\ftimes{s}{t}\, G_1\to G_1$ (the \textbf{multiplication map} $(g_1,g_2)\mapsto g_1g_2$) is smooth, and the \textbf{source map} $s\colon G_1\to G_0$ (sending an arrow to its domain), \textbf{target map} $t\colon G_1\to G_0$ (sending an arrow to its codomain), and \textbf{unit map} $u\colon G_0\to G_1$ (sending an object to its identity arrow) along with the \textbf{inversion map} $\operatorname{inv}\colon G_1\to G_1$ (sending an arrow to its inverse) are all smooth.  Moreover, we require that $s$ and $t$ be submersions.
\end{definition}

\begin{example}\labell{x:action gpd}
Let $G$ be a Lie group acting on a manifold $M$.  Then the \textbf{action groupoid} $G\ltimes M$ corresponding to the action is given by: $(G\ltimes M)_1=G\times M$, $(G\ltimes M)_0=M$, $s(g,x)=x$, $t(g,x)=g\cdot x$, and $m((g_1,x),(g_2,g_1\cdot x))=(g_2g_1,x)$. 
\end{example}

\begin{definition}[Smooth Functor]\labell{d:smooth functor}
A \textbf{smooth functor} between Lie groupoids $F\colon\mathcal{G}\to\mathcal{H}$ is a functor that is smooth as a map between spaces of objects $F_0\colon G_0\to H_0$, and smooth as a map between spaces of arrows $F_1\colon G_1\to H_1$.  A smooth functor $F\colon\mathcal{G}\to\mathcal{H}$ is a \textbf{weak equivalence} if the map $$G_1\to(G_0\times G_0)\,\ftimes{(F,F)}{(s,t)}\,H_1\colon g\mapsto(s(g),t(g),F(g))$$ is a diffeomorphism, and the map $$G_0\,\ftimes{F}{t}\,H_1\to H_0\colon(x,h)\mapsto s(h)$$ is a surjective submersion.
\end{definition}

\begin{remark}\labell{r:smooth functor}
It follows from the definition that a weak equivalence $F\colon\mathcal{G}\to\mathcal{H}$ is fully faithful and essentially surjective; in particular, there exists an equivalence of categories from $\mathcal{G}$ to $\mathcal{H}$.
\end{remark}

\begin{definition}[Morita Equivalence]\labell{d:morita}
Two Lie groupoids $\mathcal{G}$ and $\mathcal{H}$ are \textbf{Morita equivalent} if there exists a Lie groupoid $\mathcal{U}$ and weak equivalences $F\colon\mathcal{U}\to\mathcal{G}$ and $F'\colon\mathcal{U}\to\mathcal{H}$.
\end{definition}

\begin{example}
Let $P\to M$ be a principal $G$-bundle.  Then $G$ acts freely on $P$.  The corresponding action groupoid $G\ltimes P$ is Morita equivalent to the trivial groupoid $M\toto M$ (the groupoid whose arrows are only the units).
\end{example}

\begin{definition}[Classifying Space]\labell{d:class sp}
Let $\mathcal{G}$ be a Lie groupoid.  The \textbf{nerve} of $\mathcal{G}$, denoted by $N\mathcal{G}$, is the simplicial manifold $(G^{(n)})$, where $G^{(0)}=G_0$, $G^{(1)}=G_1$, and for $n>1$, $G^{(n)}$ is the $n\supth$ fibred product over $s$ and $t$ of $G_1$.  The face maps are given by $\del^1_0=s$, $\del^1_1=t$, and for each $n>1$,
\begin{align*}
\del^n_0(g_1,\dots,g_n)=&~(g_2,\dots,g_n),\\
\del^n_i(g_1,\dots,g_n)=&~(g_1,\dots,g_{i-1},g_ig_{i+1},g_{i+2},\dots,g_n) && \text{ for all $0<i<n$, and}\\
\del^n_n(g_1,\dots,g_n)=&~(g_1,\dots,g_{n-1}).
\end{align*}
The degeneracy maps are given by $$\eps^n_i(g_1,\dots,g_n)=(g_1,\dots,g_i,u(s(g_i)),g_{i+1},\dots,g_n)$$ for all $0\leq i\leq n$.  The face and degeneracy maps satisfy all of the usual simplicial identities.

Next, let $\Delta^n$ be the standard $n$-simplex.  We have face maps $\hat{\del}^n_i\colon\Delta^{n-1}\to\Delta^n$ and degeneracy maps $\hat{\eps}^n_i\colon\Delta^{n+1}\to\Delta^n$ satisfying
\begin{align*}
\hat{\del}^n_i(t_0,\dots,t_{n-1})=&~(t_0,\dots,t_{i-1},0,t_i,\dots,t_{n-1}), \text{ and}\\
\hat{\eps}^n_i(t_0,\dots,t_{n+1})=&~(t_0,\dots,t_{i_1},t_i+t_{i+1},t_{i+2},\dots,t_{n+1})
\end{align*}
satisfying the usual simplicial identities.

The \textbf{geometric realisation} of $N\mathcal{G}$ is the topological space $B\mathcal{G}$, the \textbf{classifying space} of $\mathcal{G}$, defined as $$B\mathcal{G}=\left(\coprod_{n\in\NN}\Delta^n\times G^{(n)}\right)\Big/\!\sim$$ where $\sim$ is the equivalence relation generated by
\begin{align*}
\left(\hat{\del}^n_i(\underline{t}),\underline{g}\right)&\sim\left(\underline{t},\del^n_i\left(\underline{g}\right)\right),\\
\left(\hat{\eps}^n_i(\underline{t}),\underline{g}\right)&\sim\left(\underline{t},\eps^n_i\left(\underline{g}\right)\right).
\end{align*}

Finally, a smooth functor between two Lie groupoids $F\colon\mathcal{G}\to\mathcal{H}$ induces a map of nerves $NF\colon N\mathcal{G}\to N\mathcal{H}$, which in turn induces a continuous map between the classifying spaces: $BF\colon B\mathcal{G}\to B\mathcal{H}$.
\end{definition}

We now follow the (unpublished) appendix of Leida \cite[Appendix A]{leida}.

\begin{proposition}[$\pi_n(B\mathcal{G})$ is a Morita Invariant]\labell{p:morita}
Let $\mathcal{G}$ and $\mathcal{H}$ be Lie groupoids.  If $\mathcal{G}$ and $\mathcal{H}$ are Morita equivalent, then $B\mathcal{G}$ and $B\mathcal{H}$ are homotopy equivalent.  In particular, their homotopy groups are isomorphic.
\end{proposition}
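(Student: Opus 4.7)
The plan is to first reduce the statement to the case of a single weak equivalence. By Definition~\ref{d:morita}, a Morita equivalence between $\mathcal{G}$ and $\mathcal{H}$ is witnessed by a Lie groupoid $\mathcal{U}$ together with two weak equivalences $F\colon\mathcal{U}\to\mathcal{G}$ and $F'\colon\mathcal{U}\to\mathcal{H}$. Thus it suffices to show that any weak equivalence of Lie groupoids induces a homotopy equivalence on classifying spaces; chaining the induced maps $BF$ and $BF'$ (and inverting one up to homotopy) will then give the desired homotopy equivalence $B\mathcal{G}\simeq B\mathcal{H}$. The isomorphism of homotopy groups follows immediately from functoriality of $\pi_n$ on the homotopy category.

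For the core step, I would follow the strategy sketched in Leida's appendix. The key observation is that if $F\colon\mathcal{U}\to\mathcal{G}$ is a weak equivalence, then at each simplicial level $n$ the induced map $F_n\colon U^{(n)}\to G^{(n)}$ on nerves is controlled by the defining conditions in Definition~\ref{d:smooth functor}. Full faithfulness identifies $U_1$ with the fibered product $U_0\,\ftimes{F_0}{s}\,G_1\,\ftimes{t}{F_0}\,U_0$, and inductively gives that each $U^{(n)}$ is the iterated fibered product of copies of $G_1$ with $U_0$ at the boundary objects; the essential surjectivity hypothesis, which asserts that $U_0\,\ftimes{F_0}{t}\,G_1\to G_0$ is a surjective submersion, makes the comparison with $G^{(n)}$ well-behaved manifold-theoretically.

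The actual comparison is then carried out by introducing a bi-simplicial object whose two realizations recover $B\mathcal{U}$ and $B\mathcal{G}$ up to homotopy: one builds the nerve of the ``translation'' bisimplicial manifold whose $(p,q)$-entry is $U^{(p)}\times_{G_0}G^{(q+1)}\times_{G_0}U^{(p)}$ (or the analogous double construction), so that fixing one index and realizing in the other yields $B\mathcal{U}$ on one side and $B\mathcal{G}$ on the other. Segal's realization lemma for ``good'' simplicial spaces then yields the homotopy equivalence, provided the level-wise maps are weak equivalences of spaces; the submersion conditions built into Definition~\ref{d:smooth functor} provide exactly the local-triviality needed to verify this degree by degree.

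The main obstacle is the technical verification that the nerves $N\mathcal{G}$ and $N\mathcal{U}$ are \emph{good} simplicial spaces in the sense of Segal (so that realization is homotopy invariant under level-wise weak equivalences) and that the level-$n$ comparison map is a weak equivalence in the appropriate sense. Here ``good'' requires that the degeneracy maps be closed cofibrations, which one checks using the smoothness of $u\colon G_0\to G_1$ together with a normal-bundle argument along the unit submanifold. Once this is in place, Segal's lemma applies, $BF$ is a homotopy equivalence, and the composition with $BF'$ completes the proof.
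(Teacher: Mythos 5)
Your reduction to a single weak equivalence is exactly where the paper also starts, but from there the two arguments diverge. The paper's proof is far shorter: it cites Segal's Proposition 2.1 in \cite{segal1}, which says that the classifying-space functor $B$ sends a natural transformation of continuous functors to a homotopy, and then observes via Remark~\ref{r:smooth functor} that a weak equivalence $\mathcal{U}\to\mathcal{G}$ underlies an equivalence of categories, so the natural isomorphisms between the composites and the identities realize to homotopies exhibiting $BF$ and $BF'$ as homotopy equivalences. Your route --- a bisimplicial ``translation'' object realized in two directions and compared levelwise via Segal's realization lemma --- is the standard alternative in the literature (it is essentially the argument in Moerdijk's \emph{Orbifolds as groupoids}), and it has a genuine advantage: it never requires a continuous quasi-inverse to $F$, which is the point the paper's appeal to ``an equivalence of categories'' quietly elides, since a weak equivalence of Lie groupoids admits such an inverse functor only after local choices. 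What your sketch buys in robustness it pays for in unverified technicalities. The $(p,q)$-entry you write, $U^{(p)}\times_{G_0}G^{(q+1)}\times_{G_0}U^{(p)}$, is not the right object --- the usual choice glues $U^{(p)}$ to $G^{(q+1)}$ along $F_0$ on one side only, i.e.\ the double nerve of the comma construction; the levelwise weak equivalences, which is precisely where essential surjectivity and the surjective-submersion hypothesis do their work (local sections give the contractibility of the fibres), are asserted rather than checked; and the realization-lemma argument a priori yields only a \emph{weak} homotopy equivalence, so to obtain the homotopy equivalence the proposition actually claims you still need to note that these realizations are CW complexes and invoke Whitehead. None of these is a fatal obstruction, but as written the heart of your proof remains an outline, whereas the paper discharges the whole statement by quoting Segal's homotopy-from-natural-transformation result.
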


\begin{proof}
In fact, Segal \cite[Proposition 2.1]{segal1} shows this in the case of any (small) category in which the sets of objects and arrows are topological spaces, and the structure maps are continuous.  In particular, he shows that a natural transformation is sent by the functor $B$ to an homotopy.  For our case, from the definition of Morita equivalence and Remark~\ref{r:smooth functor} it follows that the natural transformations between the functors determining the Morita equivalence yield an homotopy equivalence on the level of classifying spaces.
\end{proof}

\begin{example}[Trivial Groupoid]\labell{x:trivial gpd}
Let $M$ be a manifold.  Consider the trivial groupoid $M\toto M$.  The corresponding nerve is the simplicial manifold in which $M^{(n)}$ is just a copy of $M$, and all face and degeneracy maps are identity maps.  It follows that $B(M\toto M)$ is homeomorphic to $M$ itself.
\end{example}

\begin{example}[Classifying Space of a Group]\labell{x:class sp}
Let $G$ be a Lie group.  Consider the pair groupoid $G\times G\toto G$ with source and target $s(g_1,g_2)=g_1$ and $t(g_1,g_2)=g_2$.  Since for every $g_1,g_2\in G$, there is a unique arrow from $g_1$ to $g_2$, there is a weak equivalence from the trivial groupoid comprising one point and one arrow to $G\times G\toto G$.  It follows that the classifying space $B(G\times G\toto G)$ is contractible.

Now, let $G$ act on the groupoid $G\times G\toto G$ via $g\cdot g_1=g_1g^{-1}$ on objects and $g\cdot(g_1,g_2)=(g_1g^{-1},g_2g^{-1})$ on arrows.  Then the quotient by this action yields another groupoid $G\toto \{*\}$, which is the groupoid corresponding to the group $G$.  The action of $G$ on $G\times G\toto G$ extends to the nerve, and descends to a free action of $G$ on the classifying space $B(G\times G\toto G)$.  Since the classifying space is contractible, it follows that it is a model for $EG$, and the quotient by $G$ yields the classifying space $BG$ of $G$, which is what we expect $B(G\toto\{*\})$ to be.
\end{example}

\begin{proposition}[$B\mathcal{G}$ for an Action Groupoid]\labell{p:action gpd}
Let $G$ be a Lie group acting on a manifold $M$.  Then the classifying space $B\mathcal{G}$ of the action groupoid $\mathcal{G}:=G\ltimes M$ is homotopy equivalent to the Borel construction $EG\times_G M$.
\end{proposition}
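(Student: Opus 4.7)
The plan is to produce an explicit homeomorphism $B\mathcal{G} \cong EG \times_G M$ by a level-by-level identification of simplicial manifolds, using Milnor's simplicial model of $EG$. Let $E_\bullet G$ be the simplicial manifold with $E_p G = G^{p+1}$, face maps $\del_i$ deleting the $i$-th coordinate, and degeneracies $\eps_i$ duplicating it; this is precisely the nerve of the pair groupoid $G \times G \toto G$, so by Example~\ref{x:class sp} its realization $|E_\bullet G|$ is contractible and serves as a concrete model for $EG$, with the free diagonal left action $g \cdot (g_0, \ldots, g_p) = (g g_0, \ldots, g g_p)$ descending to $BG$ on the quotient.

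Now equip $E_\bullet G \times M$ (with $M$ constant) with the diagonal action $g \cdot (g_0, \ldots, g_p, x) = (g g_0, \ldots, g g_p, g x)$. This action is free in each simplicial degree, and the key claim is that the quotient simplicial manifold is canonically isomorphic to the nerve $N_\bullet \mathcal{G}$. Explicitly, the invariants map
\[ [(g_0, \ldots, g_p, x)] \longmapsto (k_1, \ldots, k_p, y) := \bigl(g_0^{-1} g_1,\; g_1^{-1} g_2,\; \ldots,\; g_{p-1}^{-1} g_p,\; g_p^{-1} x\bigr) \]
is a diffeomorphism $(E_p G \times M)/G \to G^p \times M = N_p \mathcal{G}$ for every $p$, and a direct bookkeeping verification shows the induced face and degeneracy maps agree with those of Definition~\ref{d:class sp} for the action groupoid: $\del_0$ drops $k_1$ and keeps $y$ (matching the source convention $s(g, x) = x$), $\del_p$ replaces $y$ by $k_p y$ (matching the target convention $t(g, x) = g x$), interior $\del_i$ multiply $k_i k_{i+1}$, and each $\eps_i$ inserts the identity. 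Taking geometric realizations, the left side becomes $B\mathcal{G} = |N_\bullet \mathcal{G}|$ by definition, and two standard identifications --- commuting realization with a free simplicial $G$-quotient and with the product by the constant simplicial space $M$ --- yield $|(E_\bullet G \times M)/G| \cong |E_\bullet G \times M|/G \cong (EG \times M)/G = EG \times_G M$.

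The main technical hurdle is the commutation of realization with the free simplicial $G$-quotient. The standard argument exploits the fact that each $E_p G \times M \to (E_p G \times M)/G$ is a principal $G$-bundle, and the bundle structure respects all face and degeneracy maps, so the skeletal filtration of the realization carries a compatible free $G$-action whose quotient reassembles into the skeletal filtration of $EG \times_G M$. In the compactly generated category this goes back to Segal~\cite{segal1}, and the author's cited appendix of Leida essentially formalizes this in the Lie groupoid setting. The second commutation, through a product with $M$, is routine in compactly generated spaces since $M$ is a manifold.
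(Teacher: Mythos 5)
Your argument is correct and follows essentially the same route as the paper: both realise $EG\times M$ as the classifying space of the product of the pair groupoid $G\times G\toto G$ with the trivial groupoid $M\toto M$, identify the quotient of that groupoid by the diagonal $G$-action with $G\ltimes M$, and then commute geometric realisation past the product and the free $G$-quotient. Your version merely makes the degreewise identification of the quotient nerve with $N_\bullet(G\ltimes M)$ explicit (with the opposite left/right action convention), where the paper phrases it as an isomorphism of quotient groupoids and leaves the verification to the reader.
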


\begin{proof}
Consider the groupoid $G\times G\toto G$ defined in Example~\ref{x:class sp}.  Take the product of this groupoid with the trivial groupoid $M\toto M$.  $G$ acts on the resulting product $G\times G\times M\toto G\times M$ by $g\cdot(g_1,x)=(g_1g^{-1},g\cdot x)$ on objects and $$g\cdot(g_1,g_2,x)=(g_1g^{-1},g_2g^{-1},g\cdot x)$$ on arrows.  One can check that the map from the quotient groupoid $$(G\times G\times M)/G\toto (G\times M)/G$$ to $\mathcal{G}$ sending the object $[g_1,x]$ to $g_1\cdot x$ and the arrow $[g_1,g_2,x]$ to $(g_2g_1^{-1},g_1\cdot x)$ is in fact an isomorphism of groupoids.  Hence their classifying spaces are homeomorphic.

Now, the functor $B$ from Lie groupoids to topological spaces respects products \cite[Section 2]{segal1}, and so applying $B$ to $G\times G\times M\toto G\times M$ yields $EG\times M$.  The $G$ action on $G\times G\times M\toto G\times M$ extends to the nerve and descends to a free action of $G$ on $EG\times M$, in which for all $h\in G$, $$h\cdot\left(\left[\underline{t},\underline{g}\right]\!,x\right)=\left(\left[\underline{t},h\cdot\underline{g}\right]\!,h\cdot x\right).$$  One then constructs a homeomorphism from $EG\times_G M$ to the classifying space of the quotient groupoid $(G\times G\times M)/G\toto(G\times M)/G$.  This completes the proof.
\end{proof}


\end{document}